\let\OLDthebibliography\thebibliography
\renewcommand\thebibliography[1]{
  \OLDthebibliography{#1}
  \setlength{\parskip}{0pt}
  \setlength{\itemsep}{0pt plus 0.3ex}
}
\newtheorem{theorem}{\bf Theorem}[section]
\newtheorem{lemma}[theorem]{\bf Lemma}
\newtheorem{corollary}[theorem]{\bf Corollary}
\newtheorem{proposition}[theorem]{\bf Proposition}
\numberwithin{equation}{section}
\newcommand*\wrapletters[1]{\wr@pletters#1\@nil}
\def\wr@pletters#1#2\@nil{#1\allowbreak\if&#2&\else\wr@pletters#2\@nil\fi}
\begin{document}
\title[4-AP free sets with 3-APs in all large subsets]{Four-term progression free sets with three-term progressions in all large subsets}

\author{Cosmin Pohoata}
\address{Yale University, New Haven, US 06511.}
\email{andrei.pohoata@yale.edu}

\author{Oliver Roche-Newton}
\address{Johann Radon Institute for Computational and Applied Mathematics (RICAM), Linz, Austria}
\email{o.rochenewton@gmail.com}

\keywords{Arithmetic Progressions, Roth's Theorem, Containers}

\thanks{}
\date{}
\begin{abstract} 
This paper is mainly concerned with sets which do not contain four-term arithmetic progressions, but are still very rich in three term arithmetic progressions, in the sense that all sufficiently large subsets contain at least one such progression. We prove that there exists a positive constant $c$ and a set $A \subset \mathbb F_q^n$ which does not contain a four-term arithmetic progression, with the property that for every subset $A' \subset A$ with $|A'| \geq |A|^{1-c}$, $A'$ contains a nontrivial three term arithmetic progression. We derive this from a more general quantitative Roth-type theorem in random subsets of $\mathbb{F}_{q}^{n}$, which improves a result of Kohayakawa-Luczak-R\"odl/Tao-Vu.

We also discuss a similar phenomenon over the integers, where we show that for all $\epsilon >0$, and all sufficiently large $N \in \mathbb N$, there exists a four-term progression-free set $A$ of size $N$ with the property that for every subset $A' \subset A$ with $|A'| \gg \frac{1}{(\log N)^{1+\epsilon}} \cdot N$ contains a nontrivial three term arithmetic progression.

Finally, we include another application of our methods, showing that for sets in $\mathbb{F}_{q}^{n}$ or $\mathbb{Z}$ the property of ``having nontrivial three-term progressions in all large subsets" is almost entirely uncorrelated with the property of ``having large additive energy".
\end{abstract}
\maketitle

\section{Introduction}
\label{intro}

%Many classical problems in combinatorics, and particularly additive combinatorics, revolve around the existence or otherwise of arithmetic progressions in certain sets. 

A $k$-term arithmetic progression in an additive group is a set of the form $\{x,x+d, \dots, x+(k-1)d\}$. If $d \neq 0$, then we say that the progression is non-trivial. The shorthand $k$-AP is used for a $k$-term arithmetic progression. If a set $A$ does not contain any non-trivial $k$-APs, we say that $A$ is \textit{$k$-AP free}.

We define $f_k(A)$ to be the size of the largest $k$-AP free subset of $A$. In the case when $A=\{1,\dots,N\}\subset \mathbb Z$, the study of the behaviour of $f_k(A)$ has been a central topic in additive combinatorics. Following the standard notation, we will write
\[
r_k(N):=f_k(\{1,\dots,N\}).
\]
The seminal result on this topic is Szemer\'{e}di's Theorem \cite{S}, which states that sets of integers with positive density contain arbitrarily long arithmetic progressions, or using the notation above $r_k(N)=o(N)$. Szemer\'{e}di's Theorem generalized Roth's Theorem \cite{R}, which had earlier established the case when $k=3$. There has since been a great deal of research aimed at finding the correct asymptotic behaviour of $r_k(N)$, particularly in the case when $k=3$. The current state-of-the-art is that
\begin{equation} \label{r3}
\frac{\log^{1/4}{N}}{2^{2\sqrt{2} \sqrt{\log N}}} \cdot N \ll r_3(N) \ll \frac{1}{(\log N)^{1+c}} \cdot N,
\end{equation}
for some absolute constant $c>0$. The upper bound in \eqref{r3} is due to a recent breakthrough result of Bloom and Sisask \cite{BlSi}, while the lower bound comes from Elkin \cite{ME}, who improved upon the celebrated construction of Behrend \cite{Be}. For more background and history on the behaviour of $r_3(N)$, see \cite{Bl}, \cite{BlSi}, and the references within.

%The Green-Tao Theorem \cite{GT} subsequently established that the primes contain arbitrary long progressions. 
Similar problems have been studied in other settings, and of particular relevance to this work is the setting of $\mathbb F_q^n$. A recent breakthrough of Croot, Lev and Pach \cite{CLP} and Ellenberg and Gijswijt \cite{EG} gave spectacular quantitative progress over $\mathbb F_q^n$, resulting in the bound
\begin{equation} \label{eq:EG}
f_3(\mathbb F_q^n) \leq q^{n(1-c_q)}
\end{equation}
where $c_q>0$ can be calculated explicitly (and satisfies $c_{q} = \Theta((\log q)^{-1})$ as $q$ grows large); see the forthcoming Section \ref{sec:ss} for more details). Note that this bound is much better than what one could hope to prove for the corresponding problem over the integers, which highlights that this change of setting leads to a rather different problem.

In this paper, we consider a problem in this direction but with a slightly different flavour. Let $k \geq 3$ be an integer and suppose that we have a set $A$ in a group $G$ which does not contain any $(k+1)$-APs. Is it always possible to find a large subset of $A$ which does not contain any $k$-APs? Or using the notation we have established, is it always the case that $f_k(A)$ is large when $A$ is $(k+1)$-AP free? 

Perhaps a first intuitive guess is that the answer should be ``yes", and that all $k$-APs can be destroyed by deleting a relatively small number of elements of $A$. Focusing on the situation when $k=3$ and $G$ is $ \mathbb Z$ or $\mathbb F_q^n$, the results of this paper give quantitative answers to this question in the negative direction. 

Define the constant $C_q$ to be
$$C_{q} = 1 + \frac{1}{c_{q}},$$
where $c_q$ is the constant mentioned above in \eqref{eq:EG}. Our main result is the following.

\bigskip

\begin{theorem} \label{thm:main}
    For all $\beta >0$, there exists $n_0=n_0(\beta)$ such that the following statement holds for all $n \geq n_0$ and for any prime power $q$. There exists a four-term progression free set $A \subset \mathbb F_q^n$ such that
    \[
    f_3(A) \leq |A|^{1-\frac{1}{2(C_q-2)}+\beta}.
    \]
\end{theorem}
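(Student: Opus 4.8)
The plan is to build $A$ as a random subset of $\mathbb F_q^n$ together with a removal of all four-term progressions it contains. More precisely, I would first pass to a large ambient space $\mathbb F_q^m$ with $m$ a suitable multiple of $n$, select each element independently with probability $p = p(m)$ chosen below, and then delete one point from every $4$-AP in the resulting random set to obtain a four-term progression free set $A$. The two competing requirements are: (i) $A$ should be large, so $p$ cannot be too small and the number of $4$-APs destroyed must be a lower-order term compared to $p q^m$; and (ii) every subset $A' \subseteq A$ with $|A'| \geq |A|^{1-c}$ should contain a nontrivial $3$-AP, which is exactly the conclusion of the general Roth-type theorem in random subsets of $\mathbb F_q^n$ referred to in the abstract (the improvement of Kohayakawa--Łuczak--Rödl/Tao--Vu). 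The key point is to feed the container/transference machinery the Ellenberg--Gijswijt bound \eqref{eq:EG} as the relevant ``density increment'' input, which is what makes the exponent $c_q$, and hence $C_q = 1 + 1/c_q$, appear.

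The main steps, in order, are as follows. \emph{Step 1:} Invoke the quantitative random Roth theorem to determine the threshold: for a random subset $R \subseteq \mathbb F_q^m$ with $|R| \sim p q^m$, if $p$ is at least roughly $q^{-m/(C_q - 1)}$ (up to the $\beta$-slack), then with high probability every subset of $R$ of size at least $|R|^{1-c}$ contains a nontrivial $3$-AP, where $c$ depends on $\beta$ and $C_q$. \emph{Step 2:} Bound the expected number of nontrivial $4$-APs in $R$ by $O(p^4 q^{2m})$, and check that with the above choice of $p$ this is $o(p q^m)$, so that removing one element per $4$-AP changes the size of $R$ by a negligible proportion; call the resulting four-term progression free set $A$, so $|A| = (1 - o(1)) p q^m \geq q^{m(1 - 1/(C_q-1)) - o(m)}$. \emph{Step 3:} Since $A \subseteq R$, the Roth-type property is inherited: every $A' \subseteq A$ with $|A'| \geq |R|^{1-c} \geq |A|^{1-c'}$ contains a nontrivial $3$-AP, i.e. $f_3(A) \leq |A|^{1-c'}$. \emph{Step 4:} Finally, optimize the relation between $m$, the size $|A| \approx q^{m(1 - 1/(C_q-1))}$, and the exponent $c'$ coming from Step 1; a direct computation turns the bound into $f_3(A) \leq |A|^{1 - \frac{1}{2(C_q - 2)} + \beta}$, with $n_0(\beta)$ chosen large enough that all the $o(\cdot)$ error terms and the high-probability events are under control, and $n \geq n_0$ obtained by working inside $\mathbb F_q^n$ with $n$ replacing $m$.

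The hard part will be Step 1: extracting a clean, quantitatively sharp Roth-in-random-sets statement with the exponent governed precisely by $c_q$ (equivalently $C_q$), rather than by the weaker counting bounds available over $\mathbb Z$. This is where the hypergraph container method (or an analogous transference argument) must be deployed carefully, using \eqref{eq:EG} to control the number of ``independent-like'' sets and thereby the size of the container family; any looseness here propagates directly into the exponent. The removal step (Step 2) and the inheritance step (Step 3) are comparatively routine first- and second-moment computations, and the final optimization (Step 4) is elementary algebra once the threshold exponent from Step 1 is pinned down. I would also need to verify that the trivial $3$-APs (those with $d = 0$) and the points removed in Step 2 do not interfere with the lower bound on $|A'|$ needed to force a nontrivial $3$-AP, which is a matter of adjusting constants and absorbing them into $\beta$.
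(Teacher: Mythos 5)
Your overall architecture is the same as the paper's proof of Theorem \ref{thm:main}: choose a random subset of $\mathbb F_q^n$, delete one point from each nontrivial $4$-AP, and transfer the bound on $f_3$ from the random Roth-type statement (Theorem \ref{thm:CG'}) via $f_3(P')\leq f_3(P)$. The gap is quantitative and it sits exactly where you place the weight, in Steps 1--2: the deletion step forces $p \ll q^{-n/3}$, since one needs the expected number of nontrivial $4$-APs, at most $p^4q^{2n}$, to be smaller than the expected size $pq^n$, i.e.\ $p^3q^n = o(1)$. Your proposed density $p \approx q^{-n/(C_q-1)}$ violates this badly: $C_q-1 = 1/c_q > 3$ for every prime power $q$ (the paper computes $C_5 \approx 15.1$, and $c_q = \Theta(1/\log q)$), so with your $p$ the $4$-AP count exceeds the size of the set by a factor $q^{n(1-3/(C_q-1))} \to \infty$, the removal annihilates the set, and $|A|$ cannot be of order $q^{n(1-1/(C_q-1))}$ as claimed in Step 2. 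Relatedly, Step 1 misstates Theorem \ref{thm:CG'}: its threshold is not a single quantity $q^{-n/(C_q-1)}$; the admissible range is $p \geq q^{n(-\frac12+\frac{t(C_q-1)}{2})}$ (up to the $\beta$-slack), coupled to the saving $t$ in the conclusion $f_3(B) \ll pq^{n(1-t+2\beta)}$, and it is this trade-off that determines the final exponent.

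The repair is the paper's parameter choice, dictated by the deletion constraint rather than by the random-Roth threshold: take $p=\frac{1}{100}q^{-n/3}$, so that with high probability $|P'|=\Omega(q^{2n/3})$ after removing one point per $4$-AP, and then apply Theorem \ref{thm:CG'} with the largest admissible $t$, namely $t=\frac{1}{3(C_q-1)}$ (for which the required lower bound on $p$ is exactly $q^{-n/3}$ up to the $\beta$-slack). This yields $f_3(P')\leq f_3(P) \ll pq^{n(1-t+2\beta)} = O\bigl(q^{n(\frac23-\frac{1}{3(C_q-1)}+2\beta)}\bigr) \approx |P'|^{1-\frac{1}{2(C_q-1)}+3\beta}$. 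Note that this computation naturally produces the saving $\frac{1}{2(C_q-1)}$; your Step 4 asserts that ``elementary algebra'' gives $1-\frac{1}{2(C_q-2)}+\beta$, but with your parameters no such computation goes through, and since $\frac{1}{2(C_q-2)}>\frac{1}{2(C_q-1)}$ the passage to the $C_q-2$ form is not a matter of absorbing constants into $\beta$, so you would need to justify it separately. Two minor points: there is no need to pass to a larger ambient space $\mathbb F_q^m$ (indeed the theorem demands a set inside $\mathbb F_q^n$ for every large $n$, so one should work there directly), and trivial $3$-APs are harmless since the container/supersaturation input already counts only nontrivial progressions.
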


\bigskip

That is, we show the existence of a set $A \subset \mathbb F_q^n$ which does not contain a non-trivial 4-AP but for which every large subset $A' \subset A$ contains a non-trivial 3-AP.
For a concrete example, one can calculate that $C_5 \approx 15.12589$, meaning that every set $A'\subset A$ larger than $|A|^{0.962}$ contains a 3-AP. 

Our proof relies on an iterated application of the so-called hypergraph container theorem, which we will describe in the next section, and which takes as input a supersaturated version of the subexponential Ellenberg-Gijswijt upper bound for 3-AP free subsets of $\mathbb F_{q}^{n}$ from \eqref{eq:EG}. In fact, we will derive Theorem \ref{thm:main} from a more general result about random subsets of $\mathbb{F}_{q}^{n}$, in the spirit of Kohayakawa-\L uczak-R\"{o}dl \cite{KLR} and Conlon-Gowers \cite{CG}.

\begin{theorem} \label{thm:CG'}
Let $\beta > 0$, $t < c_q(1-2\beta)$ and let $p$ be a positive real number satisfying
$$q^{n\left( -\frac{1}{2}+\frac{t(C_q-1)}{2}\right)} \leq p \leq 1.$$
Let $B$ be a random subset of $\mathbb{F}_{q}^{n}$ with the events $x \in B$ being independent with probability $\mathbb{P}(x \in B) = p$. Then, with probability $1 - o_{n \to \infty}(1)$ we have that
$$f_{3}(B) \ll pq^{n\left(1-t +2\beta\right)}.$$
In particular, for all $\epsilon>0$, there exists $\delta(\epsilon,q):=\delta >0$ such that if $B$ is defined as above with $p=q^{n(-\frac{1}{2}+\epsilon)}$, then with probability $1 - o_{n \to \infty}(1)$,
\[
f_3(B) \ll |B|^{1-\delta}.
\]
\end{theorem}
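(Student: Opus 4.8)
The plan is to apply the hypergraph container theorem to the $3$-AP hypergraph on vertex set $\mathbb{F}_q^n$, iterated sufficiently many times, and then intersect the resulting family of containers with the random set $B$. Let me set up the hypergraph $\cH$ whose vertices are the elements of $\mathbb{F}_q^n$ and whose edges are the (nontrivial) $3$-APs $\{x, x+d, x+2d\}$; this is a $3$-uniform hypergraph with $N := q^n$ vertices and $\Theta(N^2)$ edges, so the average degree is $\Theta(N)$. The crucial analytic input is supersaturation for the Ellenberg--Gijswijt bound: any subset of $\mathbb{F}_q^n$ of size $N^{1-c_q+o(1)}$ already contains many $3$-APs (a standard averaging/removal-type argument upgrades \eqref{eq:EG} to this form), which is precisely the co-degree/boundedness condition needed to feed the container theorem. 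One application of the container theorem produces a family $\cC$ of at most $\exp(N^{1-\eta})$ subsets of $\mathbb{F}_q^n$, each of which is "$3$-AP sparse" (contains $o(N^2)$ of the $3$-APs), such that every $3$-AP-free set is contained in some $C \in \cC$, and moreover each $|C| \le N^{1 - c_q' + o(1)}$ after we iterate the container step enough times to push the container size down. The number of iterations and the resulting exponent are exactly what produce the constant $C_q = 1 + 1/c_q$ and the exponent $\frac{t(C_q-1)}{2}$ appearing in the hypothesis on $p$; I would track these through the iteration carefully, as in the Balogh--Morris--Samotij / Saxton--Thomason framework.

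Next I would do the probabilistic step. Fix a container $C \in \cC$ of size $|C| \le N^{1-t(C_q-1) + o(1)}$ (writing the bound in the form dictated by the iteration). A $3$-AP-free subset of $B$ of size $m$ must lie inside $B \cap C$ for some $C$, so it suffices to show that with high probability $|B \cap C| \ll p N^{1-t+2\beta}$ for \emph{every} $C \in \cC$ simultaneously. Since $\mathbb{E}|B \cap C| = p|C|$, a Chernoff bound gives $\mathbb{P}(|B \cap C| \ge 2p|C| + s) \le \exp(-\Omega(s))$ for $s$ somewhat larger than $p|C|$; taking a union bound over the at most $\exp(N^{1-\eta})$ containers, the failure probability is $o(1)$ provided $p|C| \gg N^{1-\eta}$, i.e. provided $p \gg N^{-\eta}/|C|^{?}$ — and matching the exponents here is exactly where the lower bound $p \ge q^{n(-\frac12 + \frac{t(C_q-1)}{2})}$ on $p$ is used. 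One also needs $|B| \approx pN$ with high probability (again Chernoff), so that the bound $p N^{1-t+2\beta}$ on the $3$-AP-free subsets can be rewritten as $|B|^{1-\delta}$ for a suitable $\delta = \delta(\epsilon, q) > 0$ when $p = q^{n(-\frac12 + \epsilon)}$; this last conversion is a short computation choosing $t$ as a function of $\epsilon$ with $t < c_q(1-2\beta)$ and $\beta$ small.

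The main obstacle I anticipate is \emph{bookkeeping the iteration of the container theorem so that the exponents line up exactly with the stated hypothesis on $p$}. A single container application only shrinks a $3$-AP-free set into a container of size roughly $N^{1 - c_q + o(1)}$; to reach size $N^{1 - t(C_q-1)+o(1)}$ one must iterate, and at each step one must re-verify the supersaturation/boundedness hypotheses for the induced sub-hypergraph on the current container — these hypotheses are exactly what remains valid because the Ellenberg--Gijswijt bound is "self-improving" under passing to dense subsets. Keeping the $o(1)$ error terms, the number of containers $\exp(N^{1-\eta})$, and the container size all under simultaneous control through $O(1/c_q)$ iterations, and then verifying that the Chernoff union bound still closes, is the technical heart of the argument; everything else (the Chernoff estimates, the final algebra converting to $|B|^{1-\delta}$) is routine. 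The "In particular" clause then follows by specializing: given $\epsilon>0$, pick $\beta$ small and $t$ with $\frac{t(C_q-1)}{2} = \epsilon$ approximately (legitimate since $\epsilon$ can be taken small and $t < c_q(1-2\beta)$ still holds), so that $p = q^{n(-\frac12+\epsilon)}$ meets the hypothesis, and then $pN^{1-t+2\beta} = |B|^{1-\delta}$ for $\delta$ depending only on $\epsilon$ and $q$.
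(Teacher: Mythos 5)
Your overall strategy is the same as the paper's: build containers for the $3$-AP hypergraph on $\mathbb F_q^n$ by iterating Theorem \ref{thm:container} with a supersaturation input, then control $|B\cap C|$ for every container $C$ simultaneously by a union bound; your Chernoff step is equivalent to the paper's estimate $|\mathcal C|\binom{|C|}{m}p^m \le |\mathcal C|(ep|C|/m)^m$, so that part is fine. However, two of the points you defer or blur are exactly where the stated exponents live, and as written they constitute gaps. First, the supersaturation you invoke is not ``a standard averaging upgrade'' of \eqref{eq:EG}, and it is not a co-degree condition (the co-degrees are trivially $\Delta_2\le 3$, $\Delta_3\le 1$; supersaturation is needed to lower-bound the \emph{average} degree $d$ so that $\Delta(H,\tau)\le \epsilon/72$ with the chosen $\tau$). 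The paper uses the tight Fox--Lov\'asz removal lemma (Theorem \ref{thm:FL}) to get Corollary \ref{thm:supersaturation}: a set of size $q^{n(1-s)}$, $s<c_q$, contains $\Omega_q(q^{n(2-sC_q)})$ three-term progressions. A Varnavides-type averaging over subspaces gives a strictly weaker exponent (roughly $2-s(2C_q-1)$ rather than $2-sC_q$), and with that input the container count grows too fast to allow $p$ as small as $q^{n(-\frac12+\frac{t(C_q-1)}{2})}$; so the specific removal-lemma exponent is essential, not a routine upgrade.

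Second, your bookkeeping of the iteration is off: the containers one obtains have size at most $q^{n(1-t)}$, not $N^{1-t(C_q-1)+o(1)}$. In the paper's iteration (Lemma \ref{lem:containers}) one applies Theorem \ref{thm:container} to each current container $A$ with $|A|=q^{n(1-s)}$, $s\le t$, using $\epsilon=q^{-\beta n}$ and $\tau=q^{\frac n2(2\beta-1+s(C_q-1))}$; each step multiplies the edge count by $q^{-\beta n}$, so after about $\lceil tC_q/\beta\rceil$ steps every container has too few edges to be of size larger than $q^{n(1-t)}$ (again by Corollary \ref{thm:supersaturation}), and the container family has size $\exp\bigl(O(n^2 q^{\frac n2(1+t(C_q-3)+2\beta)})\bigr)$. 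The factor $C_q-1$ in the hypothesis on $p$ arises from combining this container-count exponent with $m=pq^{n(1-t+2\beta)}$, not from the container size itself; with your stated container size the exponents do not match the claim. These are repairable within your framework, but since you yourself identify this bookkeeping as the technical heart and leave it undone, the proposal as it stands does not yet prove the theorem with the stated range of $p$.
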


This allows us to detect three-term arithmetic progressions in subsets of $\mathbb{F}_{q}^{n}$ of size as small as $q^{n\left( \frac{1}{2}+\epsilon \right )}$, which is beyond the reach of the Ellenberg-Gijswijt bound \eqref{eq:EG}, provided that those subsets have large relative density compared to a random set. This improves a result of Tao and Vu from \cite[Theorem 10.18]{tv}. It is also worth pointing out that the range for $p$ in Theorem \ref{thm:CG'} is optimal. Indeed, if $p = q^{-n/2}/2$, then the expected number of three-term progressions in a random subset $B$ of $\mathbb{F}_{q}^{n}$ (where each element in $B$ is chosen independently with probability $p$) is less than $q^{n/2}/8$, while the expected number of elements in $B$ is $q^{n/2}/2$. Therefore, one can almost always remove an element from each progression and still be left with at least half the elements of $B$.

We also consider the analogue of Theorem \ref{thm:main} in the integer setting, where we obtain the following result. 

\smallskip

\begin{theorem} \label{thm:integers}
There exists $c'>0$ such that for all $N \in \mathbb N$ sufficiently large, there exists a set of integers $A$ with $|A|=N$ which does not contain any nontrivial four-term arithmetic progression, and for which
\begin{equation} \label{intbound}
f_3(A) \ll \frac{1}{(\log N)^{1+c'}} \cdot N.
\end{equation}
\end{theorem}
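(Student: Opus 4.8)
The plan is to mimic the proof of Theorem~\ref{thm:main} but with the Ellenberg--Gijswijt bound \eqref{eq:EG} replaced by the Behrend/Elkin lower bound from \eqref{r3}, which — read the other way — gives a quantitatively weaker (only polylogarithmic) upper bound on $r_3$, and to run the container argument over the integers. Concretely, I would first fix a Behrend-type $4$-AP-free set. In fact the cleanest route is to take a large $3$-AP-free set $S \subset \{1,\dots,M\}$ with $|S| \gg M/2^{C\sqrt{\log M}}$ (Behrend, or Elkin's refinement) and note that a $3$-AP-free set is automatically $4$-AP-free; this already furnishes a $4$-AP-free set, but its $f_3$ is $|S|$ itself, which is useless. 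So instead the construction must produce a $4$-AP-free set that is genuinely \emph{rich} in $3$-APs. The standard device is to dilate and translate: take $A$ to be a union of arithmetic-progression ``blocks'' arranged so that any $3$-AP lies inside a single block (so that $4$-AP-freeness is controlled by a Behrend set of block-indices), while each block is a long genuine AP and hence saturated with $3$-APs. Then the obstruction to finding a large $3$-AP-free subset of $A$ reduces to the statement that long APs have small $3$-AP-free subsets, which is exactly $r_3$.

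Thus the key steps would be: (i) Choose parameters $M$ and $L$ and let $I \subset \{1,\dots,M\}$ be a $3$-AP-free set of size $\gg M/2^{C\sqrt{\log M}}$. Set
\[
A = \bigcup_{i \in I} \bigl( i\cdot (10L) + \{1,\dots,L\} \bigr) \subset \{1,\dots, 10LM\},
\]
so that $|A| = L\,|I|$ and we put $N := |A|$. (ii) Verify that $A$ is $4$-AP-free: any $4$-AP in $A$ with nonzero common difference $d$ must, by a size/separation argument, have all four terms in distinct blocks unless $d < L$; a short case analysis (using the $10L$ gap between consecutive block-starts, far larger than the block length $L$) shows a genuine $4$-AP would force $i$ indices forming a $4$-AP — hence in particular a $3$-AP — in $I$, contradicting $3$-AP-freeness of $I$; the intra-block case $d<L$ is excluded because a single block $\{1,\dots,L\}$ has length $L$ and cannot host a $4$-AP that also... actually here one must be slightly more careful and take the blocks to be $3$-AP-free Behrend sets \emph{of their own}, or — better, and this is the genuinely correct construction — make the inter-block and intra-block structure interact so that \emph{no} $4$-AP survives while $3$-APs are plentiful. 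The cleanest implementation: let each block be a \emph{full} interval of length $L$; a $4$-AP entirely inside one interval certainly exists, so this naive version fails $4$-AP-freeness. Therefore one instead takes $I$ itself to be $4$-AP-free (not $3$-AP-free) and each block to be a Behrend $3$-AP-free set of size $L/2^{C\sqrt{\log L}}$; then $A$ is $4$-AP-free, but now $A$ contains \emph{no} $3$-APs either.

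The resolution — and this is where the real idea enters — is that we do not want $A$ itself to have few $3$-AP-free subsets by a direct block argument, but rather to invoke the \emph{container/probabilistic} machinery underlying Theorem~\ref{thm:main}. The honest plan is: (i') Let $P = \{1,\dots,M\}$ and let $B \subset P$ be obtained by the same dilation trick but so that $B$ is $4$-AP-free of size $|B| \gg M^{1-o(1)}$ while \emph{every} subset of $B$ of size $\gg \frac{|B|}{(\log M)^{1+\epsilon'}}$ contains a $3$-AP — the integer analogue of the $\mathbb F_q^n$ statement. To get this, run the hypergraph container theorem (as in the proof of Theorem~\ref{thm:main}) with the host set being $\{1,\dots,M\}$, the $3$-uniform hypergraph being the $3$-APs, and the relevant supersaturation input being the Bloom--Sisask-strength bound $r_3(M) \ll M/(\log M)^{1+c}$ from \eqref{r3}: this yields a family of containers, each of which is $3$-AP-rich in the sense that it has at most $M/(\log M)^{1+c}$ of its elements avoidable, and of which there are subexponentially many. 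Intersecting a Behrend $4$-AP-free structure with a pseudorandom/container argument exactly as in Theorem~\ref{thm:CG'} (integer version) then produces the desired $A$: one sparsifies $\{1,\dots,M\}$ by a random set of density $p$ chosen so that $p$ is just above the $3$-AP-counting threshold $1/\sqrt{M}$ up to the relevant power of $\log$, intersects with a $4$-AP-free Behrend set (which has density $2^{-C\sqrt{\log M}} = M^{-o(1)}$, comfortably above $p$), and checks via the container bound that all large subsets retain a $3$-AP, while $4$-AP-freeness is inherited from the Behrend set. Rescaling so that $|A| = N$ and tracking that $\log M \asymp \log N$ gives \eqref{intbound} with some $c' = c'(c,\epsilon) > 0$.

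The main obstacle I anticipate is the quantitative bookkeeping at the threshold: over $\mathbb F_q^n$ the gap between $q^{n/2}$ (the $3$-AP counting threshold) and $q^{n(1-c_q)}$ (the Ellenberg--Gijswijt bound) is \emph{polynomial}, which is what lets Theorem~\ref{thm:main} lose only a polynomial factor; over $\mathbb Z$ the analogous gap between $\sqrt N$ and $N/(\log N)^{1+c}$ is enormous, so one cannot hope for $f_3(A) \ll N^{1-c'}$ and must settle for the polylogarithmic saving in \eqref{intbound}. Making the container iteration terminate at exactly the right polylogarithmic scale — i.e., choosing the number of container rounds and the sparsification density $p$ so that the surviving set has size $N$ while every subset of size $N/(\log N)^{1+c'}$ still meets a $3$-AP — is the delicate point, and it will rely crucially on the strength of the Bloom--Sisask bound in \eqref{r3} (a weaker $r_3$ bound would give a weaker exponent $1+c'$, possibly only $1$). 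A secondary technical nuisance is verifying $4$-AP-freeness is preserved after all the randomization and container intersections; this should follow because $4$-AP-freeness is a monotone (downward-closed) property, so it is automatically inherited by every subset of the initial Behrend-type set, but one must make sure the dilation construction that creates the $4$-AP-free host is compatible with the $3$-AP hypergraph structure used by the containers.
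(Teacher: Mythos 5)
Your container half is essentially the paper's route: a (single) application of Theorem \ref{thm:container} to the $3$-AP hypergraph on $[N]$, with a Varnavides-type supersaturation input powered by the Bloom--Sisask bound in \eqref{r3}, followed by a union bound over containers against a random sparsification (this is exactly Proposition \ref{thm:integersgen} in the paper). The genuine gap is in how you obtain $4$-AP-freeness: your final plan intersects the random set $B_p$ with a ``$4$-AP-free Behrend set'' $S$ of density $2^{-C\sqrt{\log M}}$ and lets $A = S \cap B_p$ inherit $4$-AP-freeness from $S$. But a Behrend set is $3$-AP-free, so $A$ then contains no $3$-APs at all and $f_3(A)=|A|$ --- the very failure mode you diagnosed in your block construction reappears here. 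Nor does the union bound save you: the threshold it certifies is $m \approx pM/(\log M)^{1+c'}$, computed relative to the full interval, whereas $|S \cap B_p| \approx pM\,2^{-C\sqrt{\log M}} \ll m$, so the statement ``$B_p$ has no $3$-AP-free subset of size $m$'' says nothing nontrivial about subsets of $A$. This mismatch is fatal even if you swap the Behrend set for a $4$-AP-free host of similar density that is rich in $3$-APs: any polylogarithmic saving relative to $[M]$ is swallowed by the $2^{-C\sqrt{\log M}}$ density loss of the host, and to do better you would need containers/supersaturation \emph{relative to the structured host}, which you do not have.

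The paper's fix is to use no structured host at all, exactly as in the proof of Theorem \ref{thm:main}: take $P\subset[N]$ random of density $p=\frac{1}{100}N^{-1/3}$, so that the expected number of $4$-APs, at most $p^4N^2$, is much smaller than the expected size $pN$; delete one element from each $4$-AP to get a $4$-AP-free $P'$ with $|P'|=\Theta(N^{2/3})$, and note $f_3(P')\le f_3(P)$. The container argument (with $\eta \approx 1/h(N^{\gamma})$, $\epsilon \approx \eta/(h^{-1}(4/\eta))^4$, $\tau \approx (N\epsilon)^{-1/2}$, where $h(x)\approx(\log x)^{1+c}$) shows that with high probability $P$ has no $3$-AP-free subset of size $pN\eta^{1-\alpha}$, which after noting $\log|P'| \asymp \log N$ gives \eqref{intbound}. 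So your instinct to transplant the proof of Theorem \ref{thm:main} (and Theorem \ref{thm:CG'}) to the integers was right, and your quantitative remark that only a polylogarithmic saving is possible is sound; the misstep is outsourcing $4$-AP-freeness to a Behrend-type set instead of getting it from the random sparsification plus a deletion step, which is what keeps the final set at full density $\Theta(pN)$ and makes the polylogarithmic saving meaningful.
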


\smallskip

The constant $c'$ above is dependent on the constant $c$ in the upper bound on $r_3(N)$ in \eqref{r3}. 

It is important to mention that in the integer setting, if merely a sublinear upper bound on $f_{3}(A)$ would be the goal, one could could pretty easily explicitly describe a set of integers $A$ with no four-term progressions for which the powerful density Hales-Jewett theorem ensures that all of its relatively dense subsets share the same property; consider, for instance, the subset of the first $N$ integers with only digits $0$, $1$ or $2$ in base $6$. This is a 4-AP-free set $A$ for which indeed $f_{3}(A)=o(|A|)$ but the asymptotic notation doesn't hide good bounds. In the (non-quantitative) direction, a much more general statement was also recently established by Balogh, Liu and Sharifzadeh in \cite[Theorem 1.7]{BLS}, who show that for all $k \geq 3$, there exists a set $S$ of primes such that $S$ is $(k+1)$-AP free, and $f_k(S)=o(|S|)$. Theorem \ref{thm:integers} should perhaps be thought of as follows: there exist sets of $N$ integers without non-trivial four-term progressions for which the size of the largest 3-AP free subset is smaller than roughly the best upper bound known for $r_{3}(N)$.

After discussing the required ingredients in Sections 2 and 3, we prove Theorems \ref{thm:main} and \ref{thm:integers} in Sections 4 and 5, respectively. In Section 6, we will discuss another application of our methods, showing that for sets (in $\mathbb{F}_{q}^{n}$ or $\mathbb{Z}$) the property of ``having nontrivial three-term progressions in all large subsets" is almost entirely uncorrelated with the property of ``having large additive energy". In particular, we prove the existence of sets $A$ with minimal additive energy and small $f_3(A)$.

\smallskip

\subsection*{Asymptotic Notation}

Throughout the paper, the standard notation
$\ll,\gg$ and respectively $O$ and $\Omega$ is applied to positive quantities in the usual way. That is, $X\gg Y$, $Y \ll X,$ $X=\Omega(Y)$ and $Y=O(X)$ all mean that $X\geq cY$, for some absolute constant $c>0$. If both $X \ll Y$ and $Y \ll X$ hold, we write $X \approx Y$, or equivalently $X= \Theta(Y)$. If the constant $c$ depends on a quantity $k$, we write $X \gg_k Y$, $Y=\Omega_k(Y)$, and so on.

%When dealing with our results in $\mathbb F_q^n$ we will sometimes need to take $n$ sufficiently large in order to ensure that certain inequalities hold. So we may think of the dimension $n$ as going to infinity, and the order $q$ of the field being small.

\smallskip

\subsection*{Funding and acknowledgments}

ORN was partially supported by the Austrian Science Fund FWF Project P 30405-N32. We are grateful to Tom Bloom, Christoph Koutschan, Fernando Shao, Maryam Sharifzadeh and Adam Zsolt-Wagner for helpful conversations and advice.

\bigskip

\section{The Container Theorem}

A critical tool in this paper comes from the theory of hypergraph containers. The statement that we use is rather technical, but it can be roughly summarised as follows: if a hypergraph $H=(V,E)$ has a good edge distribution (in the sense that no vertices have unusually large degree, and more generally the elements of any set of vertices do not share too many common edges), then we obtain strong information about the independent sets of the hypergraph. This strong information is that there is a family $\mathcal C$ of subsets of $V$ such that
\begin{itemize}
    \item for every independent set $X \subset V$, there is some $A \in \mathcal C$ such that $X \subset A$,
    \item $\mathcal C $ is not too large,
    \item each $A \in \mathcal C$ does not have too many edges.
\end{itemize}

The theory of hypergraph containers was developed independently by Balogh, Morris and Samotij \cite{BMS} and Saxton and Thomasson \cite{ST}. For a recent survey on this topic, see \cite{BMS2}. This method has led to several significant breakthroughs in combinatorics in recent years, most notably in the field of extremal graph theory. However, this purely combinatorial tool has also led to new results in additive combinatorics. For example, it was proven by Balogh, Liu and Sharifzadeh \cite{BLS} that, for infinitely many $N \in \mathbb N$ there are at most $2^{O(r_k(N))}$ subsets of $[N]$ which do not contain a $k$-AP. Note that this is almost best possible, since any subset of a $k$-AP free set is $k$-AP free, and so the subsets of the largest $k$-AP free set give at least $2^{r_k(N)}$ sets which are $k$-AP free. Another application of containers closely related to (and which inspired) this paper can be found in Balogh and Solymosi \cite{BS}, where it was proven that there exists a set $P$ of $N$ points in the plane such that $P$ does not contain any collinear quadruples, but any subset of $P$ of size larger than $N^{5/6+o(1)}$ contains a collinear triple. The arguments used to prove the main results in this paper follow a similar structure to those of \cite{BS}.

In order to state the required hypergraph container result formally, we need to introduce some more notation. Let $H=(V,E)$ be an $r$-uniform hypergraph. Write $e(H)=|E|$. For any $S \subset V$, the subhypergraph induced by $S$ is denoted by $H[S]$.
The co-degree of $S$ is the quantity
\[
d(S):= | \{ e \in E : S \subseteq e \}|.
\]
In the case when $S=\{v\}$ is a singleton, we simply write $d(v)$. The average degree of a vertex in $H$ is denoted by $d$, that is,
\[
d=\frac{1}{|V|}\sum_{v \in V} d(v)=\frac{r|E|}{|V|}.
\]
For each $2 \leq j \leq r$, denote
\[
\Delta_j(H):= \max_{S \subset V : |S|=j} d(S).
\]
For $0<\tau<1$, define the function
\[
\Delta(H, \tau)= 2^{ \binom{r}{2}  - 1} \sum_{j=2}^r \frac{\Delta_j(H)}{2^{\binom{j-1}{2}} d\tau^{j-1}}.
\]
This function gives a measure of how well-distributed the edges of $H$ are. In this paper, we will only consider $3$-uniform hypergraphs, in which case the function can be expressed more straightforwardly:
\[
\Delta(H, \tau) = \frac{4\Delta_2(H)}{d \tau} + \frac{2\Delta_3(H)}{d \tau^2}.
\]

The exact result that we will use is Corollary 3.6 in \cite{ST}. %We will refer to it as the Container Theorem.

\begin{theorem} \label{thm:container}
Let $H=(V,E)$ be and $r$-uniform hypergraph with $|V|=N$. Let $0< \epsilon, \tau < 1/2$ satisfy the conditions that
\begin{itemize}
    \item $\tau < 1/(200 \cdot r \cdot r!^2)$,
    \item $\Delta(H, \tau) \leq \frac{\epsilon}{12r!}$.
\end{itemize}
Then there exists $c=c(r) \leq 1000 \cdot r \cdot r!^3$ and a collection $\mathcal C$ of subsets of $V(H)$ such that 
\begin{itemize}
    \item if $X \subseteq V$ is an independent set, then there is some $A \in \mathcal C$ such that $X \subseteq A$,
    \item for every $A \in \mathcal C$, $e(H[A]) \leq  \epsilon e(H)$, 
    \item $\log | \mathcal C| \leq cN\tau \cdot \log(1/\epsilon) \cdot \log(1 / \tau)$.
\end{itemize}

\end{theorem}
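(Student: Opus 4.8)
The plan is to reduce Theorem~\ref{thm:container} to a \emph{single-step} container lemma and then iterate it a bounded number of times. The single-step lemma asserts the existence of a deterministic procedure which, given $H$, takes as input an independent set $I \subseteq V$ and returns a pair $(S,A)$ with $S \subseteq I \subseteq A \subseteq V$ such that: (i) $|S| \le \tau N$; (ii) the container $A$ depends \emph{only} on the fingerprint $S$ (and on $H$), not on $I$ itself; and (iii) either $e(H[A])$ is already below the target, or $e(H[A]) \le (1-\delta)\,e(H)$ for an absolute constant $\delta = \delta(r) > 0$. The two hypotheses $\tau < 1/(200\,r\cdot r!^2)$ and $\Delta(H,\tau) \le \epsilon/(12\,r!)$ are exactly the conditions under which this step can be carried out: the $j$-th summand $\Delta_j(H)/(2^{\binom{j-1}{2}} d\tau^{j-1})$ of $\Delta(H,\tau)$ bounds the ``cost'' incurred by vertices whose degree is clustered on a $(j-1)$-element set of vertices, which is precisely the obstruction to a clean edge-reduction when one deletes a $\tau$-fraction of $V$.

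The single-step lemma is proved by a greedy \emph{container algorithm}. One maintains a selected set $S \subseteq I$ and a shrinking available set $A$, starting from $S = \emptyset$, $A = V$. At each stage one looks at the vertex $u$ of maximum degree in the current trace of $H$ on $A$ (away from the already-queried vertices); if this maximum degree has fallen below a threshold of order $d/\tau$, one halts. Otherwise one queries whether $u \in I$: if yes, $u$ is placed in $S$ and the $d(u)$ edges through it are removed from consideration (recursing into the link of $u$ is what brings the higher co-degree terms $\Delta_j$ into play); if no, $u$ is deleted from $A$. Since every selected vertex removes at least $c\,d/\tau$ edges for some $c = c(r) > 0$, at most $O(\tau N)$ selections suffice to destroy a constant fraction of $E(H)$, which gives (i); here the bound $\Delta(H,\tau) \le \epsilon/(12\,r!)$ controls the error terms coming from edges that would otherwise be ``protected'' by a few low-index co-degree obstructions. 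For (ii), the point is that the whole run of the algorithm --- the order of queries, the set of deleted (``rejected'') vertices, and the final set $A$ --- is a function of the earlier selections and rejections, and ``rejected'' carries no information beyond ``queried but not in $S$''; hence $A = A(S)$.

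Next I would iterate. Apply the single-step lemma to $H$ to get $(S_1, A_1)$ with $e(H[A_1]) \le (1-\delta)e(H)$, then to $H[A_1]$ to get $(S_2,A_2)$ --- after re-checking that the degree statistics of $H[A_1]$ still meet an analogue of the hypotheses, up to adjusting $\tau$ and $\epsilon$ by bounded factors --- and so on. After $m = O(\delta^{-1}\log(1/\epsilon))$ rounds one reaches a container $A_m$ with $e(H[A_m]) \le \epsilon\,e(H)$, and stops. The members of $\cC$ are the sets $A_m$ produced in this way. The composite fingerprint $S_1 \cup \dots \cup S_m$ has size at most $m\tau N = O(\tau N \log(1/\epsilon))$, and $A_m$ is determined by the ordered tuple of fingerprints, hence by this union together with the bounded round-structure; counting subsets of $V$ of size at most $m\tau N$ then gives $\log|\cC| \le m\tau N \log\!\big(eN/(m\tau N)\big) \ll c(r)\,N\tau \log(1/\epsilon)\log(1/\tau)$, the claimed bound. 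Finally every independent $X$ lies in its own container $A_m(X) \in \cC$, and every $A \in \cC$ satisfies $e(H[A]) \le \epsilon e(H)$ by construction, giving the remaining two bullets.

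The main obstacle is the single-step lemma, in two interlocking parts. The first is the quantitative heart: showing that $O(\tau N)$ high-degree selections really do kill a constant fraction of the edges. A vertex can have large degree for the ``wrong'' reason --- its edges concentrated on a small vertex set --- and it is exactly the higher terms $\Delta_j(H)$, $3 \le j \le r$, entering through the recursion into vertex links, that are needed to rule this out; making the error terms add up to at most $\epsilon/(12\,r!)$, rather than merely controlling the $j=2$ term, is the delicate point. The second is the self-consistency property in (ii): the algorithm must be arranged so that the container is genuinely a function of the fingerprint alone, which constrains how rejections are recorded. Propagating the explicit constants ($200\,r\cdot r!^2$, $12\,r!$, and $c(r) \le 1000\,r\cdot r!^3$) through both parts is bookkeeping of exactly the kind carried out in the proof of Corollary~3.6 of \cite{ST}.
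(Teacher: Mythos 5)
You should first note what you are being compared against: the paper contains no proof of Theorem~\ref{thm:container} at all --- it is quoted verbatim as Corollary~3.6 of Saxton--Thomason \cite{ST}, so the ``paper's proof'' is that citation. Your outline follows the same general strategy as the cited source (a fingerprint/container algorithm whose output container depends only on the fingerprint, iterated until the edge count falls below $\epsilon e(H)$), but as a proof it has a genuine gap: the single-step lemma you posit --- from any independent set $I$ a greedy maximum-degree procedure extracts $S\subseteq I$ with $|S|\le\tau N$ and a container $A=A(S)\supseteq I$ whose edge count has dropped by a constant factor, under exactly the hypotheses $\tau<1/(200\cdot r\cdot r!^2)$ and $\Delta(H,\tau)\le\epsilon/(12r!)$ --- \emph{is} the content of the theorem, and you do not prove it. The two points you yourself flag as delicate (that $O(\tau N)$ selections destroy a constant fraction of the edges even when degrees are concentrated on small vertex sets, with the co-degree terms $\Delta_j$ entering through links and summing to precisely $\Delta(H,\tau)$; and the explicit constants $200\cdot r\cdot r!^2$, $12r!$, $1000\cdot r\cdot r!^3$) are exactly where the work lies; in \cite{ST} this is carried out by a multi-level construction descending through uniformities $r,r-1,\dots$ and tracking degree measures of auxiliary lower-uniformity hypergraphs, and nothing in your sketch substitutes for that. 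Deferring it to ``bookkeeping of the kind carried out in the proof of Corollary~3.6 of \cite{ST}'' makes the argument circular as a blind proof; for this paper the honest course is the authors' own, namely to cite \cite{ST}.

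There is also a concrete error in the iteration step. You assert that the hypotheses survive the passage from $H$ to $H[A_1]$ ``up to adjusting $\tau$ and $\epsilon$ by bounded factors''. They need not: the co-degrees $\Delta_j$ can only decrease, but the average degree of an intermediate container can be as small as roughly $\epsilon d$ (its edge count is only known to exceed the stopping threshold $\epsilon e(H)$, while its vertex set may still have size close to $N$), so $\Delta(H[A_i],\tau)$ can exceed $\Delta(H,\tau)$ by a factor of order $1/\epsilon$, which is not bounded; compensating by enlarging $\tau$ would cost a power of $1/\epsilon$ in the fingerprint size and wreck the count. The correct repair is to prove the single step under the weaker hypothesis $\Delta(\cdot,\tau)\le 1/(12r!)$ and to observe that the factor $\epsilon$ in the assumption $\Delta(H,\tau)\le\epsilon/(12r!)$ on the \emph{original} hypergraph is exactly what guarantees $\Delta(H[A_i],\tau)\le 1/(12r!)$ for every iterate with $e(H[A_i])>\epsilon e(H)$; this is also where the $\log(1/\epsilon)$ in the bound on $\log|\mathcal C|$ comes from. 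As written, your iteration stalls after the first step, and the single-step lemma itself remains unproved.
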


\bigskip

\section{Supersaturation Results} \label{sec:ss}

In most applications of the container method, a crucial ingredient is a so-called Supersaturation Lemma. Extremal results in combinatorics often state that sufficiently large subsets of a given set contain at least one copy of some special structure. A supersaturation result goes further, and says that sufficiently dense subsets of a given set contain \textit{many} copies of certain structures.

In our particular setting we can be more concrete. We need to prove that sufficiently large subsets of $\mathbb F_q^n$ and $[N]$ contain many 3-APs. The results and techniques in these two different settings differ significantly, particularly in light of recent developments concerning the size of the largest 3AP-free set in $\mathbb F_q^n$ in \cite{CLP} and \cite{EG}.

\subsection{Supersaturation in $\mathbb F_q^n$} 

%(Olly: I'll leave it you to write a nice introduction to this result and where it fits in the literature, as I'm sure you'll do it much better than me :) )

We begin by finally defining the previously mentioned constant $c_q$ by 
\[
q^{1-c_q} = \inf_{0 < y < 1} \frac {1+y+ \dots +y^{q-1}}{y^{(q-1)/3}}.
\]
Also, recall that $C_q:=1+\frac{1}{c_q}$. For a fixed $q$, these constants $c_q$ and $C_q$ can be calculated explicitly. %For example, when $q=5$, which is the smallest case we are interested in for this paper, we have $C_q \approx 15.12589$.

Define a \textit{triangle} in $\mathbb F_q^n$ to be a triple $(x,y,z) \in \mathbb F_q^n \times \mathbb F_q^n \times \mathbb F_q^n$ such that $x+y+z=0$. To obtain a supersaturation result for arithmetic progressions in $\mathbb F_q^n$, we will make use of the following result of Fox and Lov\'{a}sz \cite{FL}.

\smallskip

\begin{theorem} \label{thm:FL}
Let $0 < \epsilon < 1$ and $\delta= (\epsilon/3)^{C_q}$. If $X,Y,Z \subset \mathbb F_q^n$ with less than $\delta q^{2n}$
triangles in $X \times Y \times Z$,
then we can remove $\epsilon q^n$ elements from $X \cup Y \cup Z$ so that no triangle remains.
\end{theorem}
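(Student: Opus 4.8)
The plan is to follow the strategy of Fox and Lov\'asz, whose driving input is the Croot--Lev--Pach/Ellenberg--Gijswijt polynomial (slice rank) method in its ``tricolored sum-free'' form: if a system of triples $(x_i,y_i,z_i)_{i\in I}$ in $\mathbb F_q^n$ satisfies $x_i+y_j+z_k=0\Rightarrow i=j=k$, then $|I|\le 3\,q^{n(1-c_q)}$, the exponent $1-c_q$ being exactly the one produced by the optimization that defines $c_q$. It is worth recording first \emph{why} $C_q=1+\tfrac1{c_q}$ is the relevant constant: blowing up a maximal tricolored sum-free system $S$ of size $\approx q^{n(1-c_q)}$ into $X=\{x_i\}$, $Y=\{y_i\}$, $Z=\{z_i\}$ produces a configuration whose only triangles are the diagonal ones, so with $N=q^n$ one has $T:=\#\{\text{triangles}\}\approx N^{1-c_q}$ and triangle vertex-cover number $\tau\approx N^{1-c_q}$, and here $\tau\approx(T/N^2)^{1/C_q}N$ with no slack --- which is precisely the assertion that ``$T<(\epsilon/3)^{C_q}N^2\Rightarrow\tau\le\epsilon N$'' is sharp. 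So the theorem amounts to the estimate $\tau(X,Y,Z)\ll T^{1/C_q}N^{1-2/C_q}$, the essential point being that the implied constant is independent of the dimension $n$.

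I would argue by contraposition: assuming that no set of $\epsilon q^n$ elements of $X\cup Y\cup Z$ meets every triangle, deduce that $T\ge(\epsilon/3)^{C_q}q^{2n}$. The basic tool is a cleaning step --- repeatedly delete from the current configuration any vertex that lies in at least one but at most $d$ of its triangles, for a threshold $d$ to be optimized --- producing a set $R$ of deleted vertices and a $d$-robust remainder $(X^*,Y^*,Z^*)$ in which every surviving vertex lies in more than $d$ triangles. Since $R$ together with any one of the classes of the remainder covers all triangles, the hypothesis forces either $|R|$ or each of $|X^*|,|Y^*|,|Z^*|$ to exceed $\tfrac12\epsilon q^n$. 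In the robust regime one sums the degrees over $X^*$: these are $|X^*|$ distinct triangles, each counted more than $d$ times, so $T\gg\epsilon q^n d$, which suffices for the right $d$. In the deleted regime one extracts structure from the deletion order: each deleted vertex can be assigned a triangle destroyed exactly by its deletion, and these triangles are distinct, so if they could be assembled into a tricolored sum-free system then its size $|R|$ would be forced below $\approx q^{n(1-c_q)}$ by the slice rank bound --- unless there are already many triangles. The idea is then to iterate this dichotomy, peeling off one ``tricolored sum-free layer'' of the configuration at a time, with the length of the iteration controlled by $1/c_q$ and each step costing a power of $\epsilon$, so that the losses compound to $(\epsilon/3)^{C_q}$.

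The step I expect to be the main obstacle is the bookkeeping of this iteration, and specifically building the tricolored sum-free systems in the deleted regime. The delicate point is that a family of pairwise vertex-disjoint triangles need \emph{not} be tricolored sum-free --- one must also rule out ``mixed'' relations $x_i+y_j+z_k=0$ with $i,j,k$ distinct --- so the witness triangles must be selected greedily so as to avoid the sumsets $X(\mathcal M)+Y(\mathcal M)$, $X(\mathcal M)+Z(\mathcal M)$ and $Y(\mathcal M)+Z(\mathcal M)$ of the partial system $\mathcal M$, and one has to keep control of how large those sumsets become as the system grows. Calibrating this so that the loss at each of the $\approx 1/c_q$ layers is a clean power of $\epsilon$, with no spurious factor of $q^n$ or extra logarithm creeping in, is exactly what forces the exponent to be $C_q=1+\tfrac1{c_q}$; the remaining choices --- the threshold $d$, and the factor $3$ in $\delta=(\epsilon/3)^{C_q}$, which merely apportions the removal budget $\epsilon$ among the three classes --- are routine.
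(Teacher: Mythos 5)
You have not actually proved the theorem, and it is worth noting that the paper does not either: Theorem \ref{thm:FL} is quoted as a black box from Fox and Lov\'asz \cite{FL}, its only role being to feed Corollary \ref{thm:supersaturation}, so the benchmark for your attempt is the original Fox--Lov\'asz argument. Your proposal gets the framing right --- the driving input is indeed the slice-rank/tricolored sum-free bound with exponent $1-c_q$, and your computation with the blown-up sum-free system correctly explains why the exponent $C_q=1+1/c_q$ is tight --- but everything after that is a plan rather than a proof, and you have located the gap yourself: the ``bookkeeping of the iteration'' and the construction of tricolored sum-free systems in the deleted regime are precisely where the entire quantitative content of the theorem lives. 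A cleaning/robustness dichotomy, a greedy selection of witness triangles avoiding the sumsets $X(\mathcal M)+Y(\mathcal M)$, $X(\mathcal M)+Z(\mathcal M)$, $Y(\mathcal M)+Z(\mathcal M)$, and a peeling of roughly $1/c_q$ layers are asserted, not carried out; in particular nothing controls how much of the configuration those sumsets (which can have size about $|\mathcal M|^2$) exclude, and nothing shows the per-layer losses compound to exactly $(\epsilon/3)^{C_q}$ rather than to a worse exponent or with a stray $q^{o(n)}$ factor.

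Moreover, the part of your sketch that is concrete demonstrably falls short of the stated bound, so the missing part is not routine. If the cover number exceeds $\epsilon q^n$, one gets at least $\tfrac{\epsilon}{3}q^n$ pairwise vertex-disjoint triangles; sparsifying them with probability $p$, deleting one triangle per surviving mixed relation, and applying the sum-free bound gives an inequality of the shape $pm-p^{3}T\le 3q^{n(1-c_q)}$ with $m\ge\tfrac{\epsilon}{3}q^n$, and optimizing $p$ yields only $T\gg \epsilon^{3}q^{n(1+2c_q)}$. Since $c_q<\tfrac12$ for every prime power $q$, this is smaller than the required $(\epsilon/3)^{C_q}q^{2n}$ throughout the main range $q^{-c_qn}\ll\epsilon\ll 1$ (the two only meet at the extremal scale $\epsilon\approx q^{-c_qn}$). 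So a single pass of ``disjoint triangles plus the slice-rank bound'' cannot give the theorem, and the tight exponent has to be produced by exactly the iterative mechanism you have left unspecified. The Fox--Lov\'asz proof does rest on the tricolored sum-free bounds, as you guessed, but closing the gap between the easy estimate above and the sharp $(\epsilon/3)^{C_q}$ threshold is the substance of \cite{FL}, and your proposal does not reconstruct it.
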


This implies the following corollary.

\smallskip

\begin{corollary} \label{thm:supersaturation}
Let $A \subset \mathbb F_q^n$ with $|A| = q^{n(1-s)}$, $0 \leq s < c_q$ and suppose that $n$ is sufficiently large. Then $A$ contains $\Omega_q(q^{n(2-sC_q)})$ non-trivial three term arithmetic progressions.
\end{corollary}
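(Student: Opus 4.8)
The plan is to deduce Corollary \ref{thm:supersaturation} from the Fox--Lov\'{a}sz removal lemma (Theorem \ref{thm:FL}) by a standard contrapositive argument, translating three-term progressions in $A$ into triangles. First I would record the identification: $(a_1,a_2,a_3)$ is a (possibly trivial) three-term progression in $A$ exactly when $a_1 - 2a_2 + a_3 = 0$, equivalently $a_1 + (-2)a_2 + a_3 = 0$; so after applying the invertible linear maps $x \mapsto x$, $y \mapsto -2y$ (valid as long as $2 \ne 0$ in $\mathbb{F}_q$; for $q$ a power of $2$ one uses instead the relation $a_1 + a_3 = 0$ together with $a_2$ arbitrary, or works with the symmetric form $x+y+z=0$ and the substitution $x=a_1, y=a_3, z = -2a_2$ directly, noting $-2 = 0$ there forces a separate but easier count), the three-term progressions in $A$ are in bijection with the triangles in $X \times Y \times Z$ where $X = A$, $Y = -2 \cdot A$ (or the appropriate scaled copy), $Z = A$. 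The point is that $|X| = |Y| = |Z| = |A| = q^{n(1-s)}$.

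Next comes the contrapositive. Suppose, for contradiction, that $A$ contains fewer than $\delta q^{2n}$ non-trivial three-term progressions, where I will choose $\epsilon$ below and set $\delta = (\epsilon/3)^{C_q}$ as in Theorem \ref{thm:FL}. The number of \emph{trivial} progressions is $|A| = q^{n(1-s)} = o(q^{2n})$, so the total number of triangles in $X\times Y\times Z$ is still at most, say, $2\delta q^{2n} < \delta' q^{2n}$ after slightly enlarging (or simply absorbing the trivial ones by shrinking $\delta$ by a constant and adjusting $\epsilon$ correspondingly — a harmless change). By Theorem \ref{thm:FL} we may then delete at most $\epsilon q^n$ elements from $X \cup Y \cup Z$, i.e.\ from (copies of) $A$, so that no triangle, hence no three-term progression, survives. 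Concretely this yields a subset $A'' \subseteq A$ with $|A''| \ge |A| - O(\epsilon q^n)$ that is entirely $3$-AP free.

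The contradiction is supplied by the Ellenberg--Gijswijt bound \eqref{eq:EG}, which with the present normalization reads $f_3(\mathbb{F}_q^n) \le q^{n(1-c_q)}$. If I choose $\epsilon$ to be a sufficiently small constant multiple of $q^{-ns}$ — explicitly $\epsilon = c \cdot q^{-ns}$ for a small absolute constant $c$, which is legitimate since $s < c_q$ and $n$ is large so $\epsilon < 1/2$ — then $|A''| \ge q^{n(1-s)} - O(c\, q^{n(1-s)}) \ge \tfrac12 q^{n(1-s)} > q^{n(1-c_q)}$, the last inequality because $s < c_q$ and $n$ is large. This contradicts \eqref{eq:EG}. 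Hence $A$ contains at least $\delta q^{2n}$ three-term progressions; unravelling, $\delta = (\epsilon/3)^{C_q} = \Theta_q\big( q^{-nsC_q} \big)$, so the count is $\Omega_q\big( q^{n(2 - sC_q)} \big)$, which is the claimed bound. Finally one subtracts off the at most $q^{n(1-s)} = o(q^{n(2-sC_q)})$ trivial progressions (using $sC_q < c_q C_q = c_q + 1 < 2$... in fact one should check $2 - sC_q > 1 - s$, equivalently $1 > s(C_q - 1) = s/c_q$, which holds since $s < c_q$) to conclude that the \emph{non-trivial} progressions alone number $\Omega_q(q^{n(2-sC_q)})$.

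The main obstacle — really the only delicate point — is bookkeeping the interaction between $\epsilon$ and $s$: one must verify that taking $\epsilon \asymp q^{-ns}$ simultaneously keeps $\epsilon$ below the threshold $1/2$ required by Theorem \ref{thm:FL}, keeps $|A''|$ above the Ellenberg--Gijswijt threshold $q^{n(1-c_q)}$ (which is exactly where the hypothesis $s < c_q$ is used, and why it cannot be relaxed), and makes the resulting $\delta = (\epsilon/3)^{C_q}$ produce precisely the exponent $2 - sC_q$. The characteristic-$2$ case, where the substitution degenerates, needs a one-line separate treatment but is strictly easier (the equation $a_1 = a_3$ with $a_2$ free gives $|A|^2 = q^{2n(1-s)} \gg q^{n(2-sC_q)}$ progressions outright, since $sC_q > 2s \ge 2(1-s)$ fails in general — one instead just notes the count $|A|^2$ already beats $q^{n(2-sC_q)}$ whenever $s < c_q$, as $2(1-s) \ge 2 - sC_q \iff sC_q \ge 2s \iff C_q \ge 2$, which always holds).
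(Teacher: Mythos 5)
Your proposal is correct and takes essentially the same approach as the paper: use the Ellenberg--Gijswijt bound \eqref{eq:EG} to argue that deleting $\epsilon q^n$ elements with $\epsilon \approx q^{-ns}$ cannot make $A$ three-term progression free (this is where $s<c_q$ enters), then apply Theorem \ref{thm:FL} in contrapositive form with $X=Y=A$, $Z=-2A$ to get at least $(\epsilon/3)^{C_q}q^{2n}=\Omega_q\bigl(q^{n(2-sC_q)}\bigr)$ triangles, and finally note that trivial progressions are negligible since $s(C_q-1)<1$. The only differences are cosmetic: you phrase it as a contradiction rather than a direct contrapositive, and you add an aside on characteristic $2$ (which the paper does not treat).
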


\smallskip

\begin{proof} Applying the bound \eqref{eq:EG}, we know that for some constant $k$, every subset of $A$ with size greater than $kq^{n(1-c_q)}$ contains a three term arithmetic progression. Let $\epsilon=\frac{1}{2q^{ns}}$. It therefore follows that, for $n$ sufficiently large,
\[
 |A| - \epsilon q^n = \frac{q^{n(1-s)}}{2} \geq kq^{n(1-c_q)}.
\]
In particular, any subset of $A$ of size $|A|-\epsilon q^n$ contains a non-trivial $3$-AP. To put it another way, if we remove $\epsilon q^n$ elements from $A$, the resulting set still contains a $3$-AP.

Now we can apply Theorem \ref{thm:FL} in its contrapositive form with $X=Y=A$ and $Z=-2A$, so that the property of being triangle free is the same as that of being $3$-AP free. It follows that $A \times A \times (-2A)$ contains at least
\[ \delta q^{2n}=\left(\frac{1}{6q^{ns}}\right )^{C_q}q^{2n} = k'(q) q^{n(2-sC_q)}
\]
triangles. Some of these triangles may correspond to trivial arithmetic progressions, but the number of such progressions is negligible and the proof is complete.
\end{proof}

\subsection{Supersaturation in the integers}

A supersaturation lemma for three term arithmetic progressions in $[N]$ is already standard, in the form of Varnavides' Theorem. We will use the following formulation, which can be derived from Lemma 3.1 in \cite{CS}.

\bigskip

\begin{theorem} \label{thm:varnavides} Suppose that for all $N \in \mathbb N$ we have $r_3(N) \leq \frac{N}{h(N)}$ for some invertible function $h:\mathbb R^+ \rightarrow \mathbb R^+$. Then for all $A \subset [N]$ with cardinality $|A| = \eta N$, such that
\[
1 \leq \left \lfloor h^{-1} \left( \frac{4}{\eta} \right) \right \rfloor \leq N
\]
$A$ contains at least 
\[
\left( \frac{\eta}{2(h^{-1}(\frac{4}{\eta}))^4} \right)N^2,
\]
non-trivial three term arithmetic progressions.
%where
%\[h(\eta):= \exp \left( \frac{-C(\delta)}{\eta^{1+\delta}}   \right )
%\]
\end{theorem}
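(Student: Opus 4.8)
The statement to prove is Theorem~\ref{thm:varnavides}, a quantitative Varnavides-type supersaturation result for 3-APs in $[N]$, derived from the hypothesis $r_3(N) \le N/h(N)$.

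The plan is to run the standard averaging-over-subprogressions argument that converts a density-increment/extremal bound into a supersaturation count. Fix $A \subset [N]$ with $|A| = \eta N$, and set $M = \lfloor h^{-1}(4/\eta) \rfloor$; the hypothesis $1 \le M \le N$ ensures this is a legitimate length. The key observation is that $r_3(M) \le M/h(M) \le M/h(h^{-1}(4/\eta)) = \eta M/4$ (using that $h$ is invertible, hence monotone, and $M \le h^{-1}(4/\eta)$), so any subset of a genuine arithmetic progression of length $M$ inside $[N]$ that has relative density exceeding $\eta/4$ must contain a nontrivial 3-AP.

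The main steps are as follows. First I would count, with multiplicity, pairs $(P, a)$ where $P$ ranges over nontrivial $M$-term APs contained in $[N]$ and $a \in A \cap P$; by a direct count the number of such $M$-APs is $\Theta(N^2/M)$ (more precisely at least $c N^2/M$ for an appropriate constant once $N$ is large relative to $M$), and by linearity of expectation the average value of $|A \cap P|$ over these progressions is $\approx \eta M$. Second, a Markov-type argument: since the average of $|A \cap P|$ is about $\eta M$, a positive proportion (at least order $\eta$) of the progressions $P$ satisfy $|A \cap P| \ge (\eta/2) M > r_3(M)$, hence each such $P$ contains a nontrivial 3-AP lying inside $A$. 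Third, each fixed nontrivial 3-AP in $[N]$ is contained in at most $O(M^2)$ of the length-$M$ progressions $P$ (choose the common difference scaling factor and the offset). Dividing the number of (progression, embedded 3-AP) incidences, which is $\gg \eta \cdot N^2/M$, by the overcounting factor $O(M^2)$ yields $\gg \eta N^2 / M^3$ distinct 3-APs in $A$; tracking the constants and being slightly lossy gives the stated bound $\left( \frac{\eta}{2 (h^{-1}(4/\eta))^4} \right) N^2$, where one power of $M$ is absorbed into constants for safety. Throughout I would cite Lemma~3.1 of \cite{CS} to package the clean version of this averaging so that the bookkeeping of absolute constants is handled uniformly.

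The main obstacle is purely bookkeeping rather than conceptual: one must choose the subprogression length $M$ small enough that the extremal bound $r_3(M) \le \eta M/4$ bites, yet large enough that $[N]$ contains enough length-$M$ progressions for the averaging to have teeth, and then carefully control the two sources of loss (the fraction of ``good'' progressions from the Markov step, and the overcounting of each 3-AP by the ambient progressions) so that they combine into the claimed explicit constant $\frac{\eta}{2(h^{-1}(4/\eta))^4}$. Since \cite{CS} already records the needed lemma in essentially this form, the proof reduces to verifying that the hypotheses of that lemma are met with the parameter choice $M = \lfloor h^{-1}(4/\eta) \rfloor$ and reading off the conclusion; the condition $1 \le \lfloor h^{-1}(4/\eta)\rfloor \le N$ in the statement is exactly what guarantees applicability.
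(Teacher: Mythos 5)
Your route is the same as the paper's: the theorem is obtained by invoking Lemma 3.1 of \cite{CS} with progression length $M=\lfloor h^{-1}(4/\eta)\rfloor$, and your averaging sketch (count $M$-term progressions in $[N]$, Markov to find many $P$ with $|A\cap P|$ above the critical density, then divide by the $O(M^2)$ overcount) is exactly the standard proof behind that lemma, so structurally nothing is missing. However, your ``key observation'' is derived with the monotonicity inequality pointing the wrong way: since $h$ is increasing and $M\le h^{-1}(4/\eta)$, you get $h(M)\le 4/\eta$ and hence $M/h(M)\ge \eta M/4$, not $\le$; the chain $r_3(M)\le M/h(M)\le M/h(h^{-1}(4/\eta))=\eta M/4$ does not follow as written. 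The correct way to make the floor choice work is to use the other side of it: $M+1>h^{-1}(4/\eta)$ gives $h(M+1)>4/\eta$, so $r_3(M)\le r_3(M+1)\le (M+1)/h(M+1)<\eta(M+1)/4\le \eta M/2$ (using $M\ge 1$); alternatively take $M=h^{-1}(4/\eta)$ itself, as the paper does in its one-line derivation, or the ceiling, in which case $h(M)\ge 4/\eta$ and your inequality is valid as stated. One further bookkeeping point: to pass from the Croot--Sisask count with numerator $\eta-\frac{r_3(M)+1}{M}$ to the stated constant $\eta/2$ you also need $1/M$ to be small compared with $\eta$ (in the paper's later application this is supplied by the assumption $h(x)\le x$, which forces $M\gg 1/\eta$); the hypothesis $1\le M\le N$ alone does not give it, so it is worth making explicit where that slack comes from. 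With these corrections your argument coincides with the paper's.
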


Theorem \ref{thm:varnavides} follows by applying \cite[Lemma 3.1]{CS} with $M=h^{-1}\left(\frac{4}{\eta} \right )$.

%%%%%%%%%%%%%%%%%%%%%%%%%%%%%Proof commented out%%%%%%%%%%%%%%%%%%%%%%%%%%%%%%%%%%%

\begin{comment}
\begin{proof}
Apply Lemma 3.1 from \cite{CS} with
\[
M=\left \lfloor h^{-1} \left( \frac{4}{\eta} \right) \right \rfloor.
\]
The condition of Lemma 3.1 that $1 \leq M \leq N$ is guaranteed by the condition of the statement of Theorem \ref{thm:varnavides}.

Therefore, $A$ contains at least
\[
\left (\frac{\eta - \frac{r_3(M)+1}{M}}{M^4}\right)N^2 \geq \left (\frac{\eta/2}{M^4}\right)N^2 ,
\]
as required.

\end{proof}
\end{comment}

\bigskip

\section{Proof of Theorem \ref{thm:main} via Theorem \ref{thm:CG'}}

\bigskip

The proof of Theorem \ref{thm:CG'} begins by iteratively applying the container theorem to subsets of $\mathbb F_q^n$ in order to establish the existence of a convenient family of sets $\mathcal C$ which contain all 3-AP free subsets of $\mathbb F_q^n$. This results in the following container lemma.

\smallskip

\begin{lemma} \label{lem:containers}
For all $\beta>0$ and for all $0\leq t \leq c_q(1-3\beta)$ there exists a constant $c=c(q,\beta)$ such that there exists a family $\mathcal C$ of subsets of $\mathbb F_q^n$ with the following properties:
\begin{itemize}
\item $|\mathcal C| \leq 2^{n^2c(q,\beta)q^{n\left(\frac{1}{2}+\beta+\frac{t(C_q-3)}{2}\right )}}$,
    \item for all $A \in \mathcal C$, $|A| \leq q^{n\left (1- t \right)}$,
    \item if $X \subset \mathbb F_q^n$ is 3-AP free, then there exists $A \in \mathcal C$ such that $X \subseteq A$.
\end{itemize}

\end{lemma}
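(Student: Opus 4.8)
## Proof proposal for Lemma \ref{lem:containers}

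The plan is to bootstrap the hypergraph container theorem: we apply Theorem \ref{thm:container} to the 3-AP hypergraph on $\mathbb F_q^n$, obtain a family of containers each of which is somewhat smaller than $\mathbb F_q^n$, and then — since the containers are still dense enough that the supersaturation bound of Corollary \ref{thm:supersaturation} applies inside them — we re-apply the container theorem separately inside each container, and iterate. After $O_q(1)$ (in fact a number of rounds depending on $\beta$ and $t$) rounds the containers have shrunk to size at most $q^{n(1-t)}$, and the product of the container-family sizes from each round gives the claimed bound on $|\mathcal C|$.

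Concretely, let $H$ be the 3-uniform hypergraph whose vertex set is $\mathbb F_q^n$ and whose edges are the (unordered) nontrivial 3-APs; more precisely, when we run the argument inside a container $A$ with $|A| = q^{n(1-s)}$ we take $H = H_A$ to be the hypergraph on $A$ whose edges are the 3-APs contained in $A$. By Corollary \ref{thm:supersaturation}, $e(H_A) \gg_q q^{n(2-sC_q)}$ as long as $s < c_q$, so the average degree is $d \approx q^{n(2-sC_q)}/q^{n(1-s)} = q^{n(1 - s(C_q-1))}$. The co-degree bounds are immediate from the rigidity of 3-APs: two points determine at most $O(1)$ three-term progressions through them, so $\Delta_3(H_A) = O(1)$, and a single point lies in $O(q^{n(1-s)})$ three-term progressions inside $A$, so $\Delta_2(H_A) = O(q^{n(1-s)})$. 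Plugging these into $\Delta(H,\tau) = 4\Delta_2/(d\tau) + 2\Delta_3/(d\tau^2)$ and choosing $\tau$ to be a suitable power $\tau \approx q^{-n\gamma}$ of $q^n$, one checks that the hypotheses of Theorem \ref{thm:container} are met with, say, $\epsilon$ a small constant; the dominant term is $\Delta_2/(d\tau) \approx q^{n(1-s)} / (q^{n(1-s(C_q-1))}\tau) = q^{n(s(C_q-2))}\tau^{-1}$, so we need $\tau \gg q^{n s(C_q-2)}$ up to constants — this is what dictates the exponent bookkeeping. The container theorem then produces a family $\mathcal C_A$ of sub-containers of $A$, each with $e(H_A[A']) \le \epsilon e(H_A)$, hence (again by the supersaturation lower bound applied in the contrapositive, comparing $\epsilon e(H_A)$ to the supersaturation count for a set of the putative next size) each $A'$ has size at most $q^{n(1-s')}$ for a definite increment $s' > s$; and $\log|\mathcal C_A| \ll |A|\tau \log(1/\epsilon)\log(1/\tau) \ll n\, q^{n(1-s)}\tau$.

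The iteration then runs as follows: start with $s_0 = 0$, and at each step replace every current container of "size exponent" $s_i$ by the family of its sub-containers, which have size exponent at least $s_{i+1}$. One must check that the per-step increment $s_{i+1} - s_i$ is bounded below by a positive constant depending only on $q$ (this follows from the gap between $e(H)$ and $\epsilon\, e(H)$ translating, through the polynomial-in-$q^n$ shape of the supersaturation bound $q^{n(2-sC_q)}$, into a fixed additive jump in $s$), so that after a bounded number $m = m(q,\beta,t)$ of rounds we reach $s_m \ge t$ — and we stop the first time a container has size $\le q^{n(1-t)}$, leaving it untouched thereafter. The total number of containers is at most $\prod_{i} \max_A |\mathcal C_A^{(i)}|$, and since each factor has $\log$ at most $n\,c(q,\beta) q^{n(1-s_i)}\tau_i$ with $\tau_i \approx q^{ns_i(C_q-2)}$, each factor contributes $\log \ll n\, c(q,\beta)\, q^{n(1 + s_i(C_q-3))}$; the worst (largest) such contribution over the $O(1)$ rounds, with $s_i$ ranging up to roughly $t$ and an extra $\beta$-slack absorbed into the exponent from the choices of $\epsilon$ and $\tau$, is $q^{n(1/2 + \beta + t(C_q-3)/2)}$ — note $1 + s(C_q-3)$ is increasing in $s$ when $C_q > 3$, so the binding term comes from the last round $s \approx t$, but the stated exponent has $t/2$ rather than $t$, which means the intended bookkeeping splits the "$1$" as $1/2 + 1/2$ and pairs the $1/2$ with the final value — I would need to track this carefully, absorbing the factor $n^{O(1)}$ into the $n^2 c(q,\beta)$ in front. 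Multiplying $m = O(1)$ such factors only changes the constant $c(q,\beta)$.

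The main obstacle is precisely this exponent accounting in the last paragraph: making the per-round choices of $\tau_i$ and $\epsilon_i$ so that (a) Theorem \ref{thm:container}'s two hypotheses hold at every round, (b) the size increment $s_{i+1}-s_i$ stays uniformly bounded below so the process terminates in $O_q(1)$ rounds, and (c) the resulting bound on $\log|\mathcal C|$ comes out as $n^2 c(q,\beta) q^{n(1/2 + \beta + t(C_q-3)/2)}$ rather than something with a worse exponent — in particular verifying that the constraint $t \le c_q(1-3\beta)$ is exactly what keeps every intermediate container in the range $s < c_q$ where supersaturation is available, with the $\beta$'s providing the slack needed both to stay away from the $c_q$ threshold and to swallow the $\log(1/\tau)$ and polynomial-in-$n$ losses. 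Everything else — the co-degree estimates, the translation of "few edges" into "small size" via supersaturation, and the union over rounds — is routine.
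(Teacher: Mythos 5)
Your overall strategy is the paper's: iterate the container theorem on the 3-AP hypergraph, using Corollary \ref{thm:supersaturation} both to lower-bound the average degree and to convert ``few edges'' into ``small container''. But there is a genuine gap in the co-degree bookkeeping, and it is exactly the point you flag as unresolved. In the notation of Section 2, $\Delta_2$ is the maximum number of edges containing a fixed \emph{pair} of vertices, which for 3-APs is at most $3$ (and $\Delta_3\le 1$); what you call $\Delta_2=O(q^{n(1-s)})$ is the maximum \emph{vertex} degree, which never enters $\Delta(H,\tau)$ since the sum there starts at $j=2$. With your value, the ``dominant term'' $\Delta_2/(d\tau)$ forces $\tau\gtrsim q^{ns(C_q-2)}\ge 1$ for $s>0$, which is incompatible with the hypothesis $\tau<1/21600$, so the container theorem could never be applied past the first round; and it is also why your per-round count comes out as $q^{n(1+s(C_q-3))}$ instead of the claimed $q^{n(\frac12+\beta+\frac{s(C_q-3)}{2})}$ --- the factor-two discrepancy in the exponent that you noticed but could not close. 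With the correct bounds $\Delta_2\le 3$, $\Delta_3\le 1$, the binding term is $2\Delta_3/(d\tau^2)$, so one may take $\tau= q^{\frac n2(2\beta-1+s(C_q-1))}$ (legitimately less than $1$ precisely because $s\le t\le c_q(1-3\beta)$), and then $|A|\tau\approx q^{n(\frac12+\beta+\frac{s(C_q-3)}{2})}$, which is the stated exponent after absorbing the two logarithms into $n^2c(q,\beta)$.

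A second, related inconsistency: you propose taking $\epsilon$ a small constant, but then a container with $e(H[A'])\le\epsilon e(H[A])$ only gains $\Theta(1/n)$ in the size exponent $s$ (supersaturation gives $q^{n(s-s')C_q}\ll\epsilon$), so your claim of a per-round jump in $s$ bounded below by a constant, and hence termination in $O(1)$ rounds, fails. The paper takes $\epsilon=q^{-\beta n}$, which yields an additive jump of at least roughly $\beta/C_q$ per round and termination after $k=\lceil tC_q/\beta+1\rceil$ rounds, with $k$ absorbed into $c(q,\beta)$. These two parameter choices ($\Delta_2\le 3$ and $\epsilon=q^{-\beta n}$) are precisely what make the exponent $\frac12+\beta+\frac{t(C_q-3)}{2}$ come out, so the gap is substantive rather than cosmetic.
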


\smallskip

\begin{proof}

At the outset, this problem is converted into a graph theoretic situation in order to setup an application of Theorem \ref{thm:container}. Given $A \subset \mathbb F_q^n$, define a 3-uniform $H(A)=(V,E)$ hypergraph with vertex set $V=A$. Three distinct vertices form an edge in $H$ if and only if they form a three term arithmetic progression. 

The aim is to find a good set of containers for the hypergraph $H(\mathbb F_q^n)$. We will eventually obtain a family $\mathcal C$ of subsets of $\mathbb F_q^n$ such that
\begin{itemize} \label{claim}
    \item $|\mathcal C| \leq 2^{n^2c(q,\beta)q^{n\left(\frac{1}{2}+\beta+\frac{t(C_q-3)}{2}\right )}}$,
    \item for all $A \in \mathcal C$, $|A| \leq q^{n\left (1-t \right)}$,
    \item if $X$ is an independent set in the hypergraph $H(\mathbb F_q^n)$, then there is some $A \in \mathcal C$ such that $X \subseteq A$.
\end{itemize}

%In the above, $c(n,q,\beta)$ is a constant depending on $n,q$ and $\beta$.

Once the existence of such a family $\mathcal C$ has been established, the proof of Lemma \ref{lem:containers} will be complete.

We will iteratively apply the container theorem to subsets of $\mathbb F_q^n$. We begin by applying Theorem \ref{thm:container} to the graph $H(\mathbb F_q^n)$. As a result, we obtain a set $\mathcal C_1$ of containers. We iterate by considering each $A \in \mathcal C_1$. If $A$ is not small enough, then we apply Theorem \ref{thm:container} to the graph $H(A)$ to get a family of containers $\mathcal C_A$. If $A$ is sufficiently small, then we put this $A$ into a final set $\mathcal C$ of containers (or to put it another way, we write $\mathcal C_A=A$). 

Repeating this for all $A \in \mathcal C_1$ we obtain a new set of containers
\[
\mathcal C_2 = \bigcup_{A \in  \mathcal C_1} \mathcal C_A.
\]
Note that $\mathcal C_2$ is a container set for $H(\mathbb F_q^n)$. Indeed, suppose that $X$ is an independent set in $H(\mathbb F_q^n)$. Then there is some $A \in \mathcal C_1$ such that $X \subset A$. Also, $X$ is an independent set in the hypergraph $H(A)$, which implies that $X \subset A'$ for some $A' \in \mathcal C_A \subset \mathcal C_2$.

We then repeat this process, defining
\[
\mathcal C_i = \bigcup_{A \in  \mathcal C_{i-1}} \mathcal C_A.
\]
By choosing the values of $\tau$ and $\epsilon$ appropriately, we can ensure that after relatively few steps we have all of the elements of $\mathcal C_k$ sufficiently small. We then declare $\mathcal C= \mathcal C_k$. It turns out that, because of $k$ being reasonably small, $|\mathcal C|$ is also fairly small.

Now we give more precise details of how to run this argument. Let $A \in \mathcal C_j$, with $j \leq k$, where $k$ is the total number of steps in the iterative process, the value of which will be specified precisely later. Write $|A|=q^{n(1-s)}$. If $s \leq t$, then apply the container theorem to $H(A)$ with
\[ \epsilon = q^{-\beta n}, \,\,\,\,\,\,\,\,\,\, \tau= q^{\frac{n}{2} (2\beta -1 +s(C_q-1))}.
\]
In order to apply the container theorem, we need to check that the conditions $\tau < 1/(200 \cdot 3 \cdot 3!^2)= 1/21600$, and $\Delta(H, \tau) \leq \frac{\epsilon}{72}$ hold. The first of these conditions will hold if we take $n$ sufficiently large. This follows from the condition that $s \leq t \leq c_q(1-3\beta)$.

For the second condition, we need to verify that
\begin{equation} \label{eq:condition}
 \frac{4 \Delta_2}{d \tau} + \frac{2 \Delta_3}{d \tau^2}  \leq \frac{\epsilon}{72}.
\end{equation}
Observe that, for any subset $A \subset  \mathbb F_q^n$, $\Delta_2(H(A))\leq 3$, since for any two distinct elements $a_1,a_2 \in A$, there are at most three possible choices of a third element $a_3 \in A$ such that $\{a_1,a_2,a_3\}$ forms an arithmetic progression. We also have $\Delta_3(H(A)) \leq 1$.

To bound the average vertex degree $d$, we use Corollary \ref{thm:supersaturation}. The set $A$ has cardinality $q^{n(1-s)}$, implying that it contains $\Omega_q(q^{n(2-sC_q)})$ non-trivial three-term arithmetic progressions. Therefore, 
\[ d = \frac{3 |E(H(A))|}{|A|} \gg_q \frac{ q^{n(2-sC_q)}}{q^{n(1-s)}}= q^{n(1-s(C_q-1))}.
\]
Therefore, it follows that, for some constant $c_0$ depending on $q$,
\[ \frac{4 \Delta_2}{d \tau} + \frac{2 \Delta_3}{d \tau^2} \leq \frac{12}{d \tau} + \frac{2}{d \tau^2} \leq \frac{14}{d\tau^2} < \frac{c_0}{q^{2\beta n}} \leq \frac{\epsilon}{72},
\]
where the last inequality holds for all $n$ sufficiently large. This verifies the condition \eqref{eq:condition}, and so we can apply Theorem \ref{thm:container} and obtain a set of containers $\mathcal C_A$ with
\[ | \mathcal C_A| \leq 2^{cq^{n(1-s)}\tau \cdot \log(1/\epsilon) \cdot \log(1 / \tau)} \leq 2^{c(n\log q)^2q^{\frac{n}{2}(1+s(C_q-3)+2\beta)}}. \]
Since $s \leq t$, it follows that we have the bound
\[ | \mathcal C_A| \leq 2^{c(n\log q)^2q^{\frac{n}{2}\left (1+t(C_q-3)+2\beta\right )}}.\]
We also know that, for each $B \in \mathcal C_A$,
\[e(H(B)) \leq \epsilon e(H(A)) = q^{-\beta n}e(H(A)).
\]
Therefore, at the $i$th level of this iterative procedure, a container $B \in \mathcal C_i$ satisfies
\[ e(H(B)) \leq q^{n(2-i\beta)}.
\]
This is good, because after $c(\beta)$ steps we can ensure that $e(H(B))$ is sufficiently small so that we can apply Theorem \ref{thm:supersaturation} and deduce that $|B| \leq q^{n\left (1-t\right)}$. In particular, if we take 
\[
k:=\left \lceil \frac{tC_q}{\beta}+1 \right \rceil ,
\]
then Theorem \ref{thm:container} tells us that for each $B \in \mathcal C_k$, $|B| \leq q^{n(1-t)}$.

So, the process terminates after at most $k$ steps. This implies that the final set of containers $\mathcal C= \mathcal C_k$ has cardinality
\[|\mathcal C| \leq 2^{c (n\log q)^2 k q^{\frac{n}{2}\left(1+t(C_q-3)+2\beta\right)}} = 2^{n^2c(q,\beta) q^{\frac{n}{2}\left(1+t(C_q-3)+2\beta\right)}} ,
\]
as claimed.

\end{proof}

The set of containers established in Lemma \ref{lem:containers} can now be used to deduce Theorem \ref{thm:CG'}, which we recall for the reader's convenience.

%%%%%%%%%%%%%%%%%%%%%%%%%%%%%%%%%%
\begin{comment}

\begin{theorem} \label{thm:CG'}
\begin{itemize}
    \item If $q^{n\left( -\frac{1}{2}+\frac{C_q-1}{6(C_q-2)}\right)} \leq p \leq 1 $ then
    \[ \lim_{n \rightarrow \infty} \mathbb P \left( [\mathbb F_q^n]_p \text{ contains a 3-AP free subset of size at least } pq^{n\left(1-\frac{1}{6(C_q-2)} +2\beta\right)}\right) = 0.
    \]
    \item If $p \leq q^{n\left( -\frac{1}{2}+\frac{C_q-1}{6(C_q-2)}\right)}$ then
    \[ \lim_{n \rightarrow \infty} \mathbb P \left( [\mathbb F_q^n]_p \text{ contains a 3-AP free subset of size at least } q^{n\left(\frac{1}{2}+\frac{C_q-3}{12(C_q-2)} +2\beta\right)}\right) = 0.
    \]
\end{itemize}
In particular, for all $ q^{n\left ( -\frac{1}{2} + \frac{C_q-3}{12(C_q-2)} + 2\beta \right )} \leq p \leq 1$ there exists $\epsilon$ such that
\begin{equation} \label{CGgen} \lim_{n \rightarrow \infty} \mathbb P \left( [\mathbb F_q^n]_p \text{ contains a 3-AP free subset of size at least } |[\mathbb F_q^n]|^{1-\epsilon} \right ) = 0.
    \end{equation}

\end{theorem}

\end{comment}
%%%%%%%%%%%%%%%%%%%%%%%%%%%%%%%%%%%
\bigskip

\noindent {\bf{Theorem \ref{thm:CG'}}}. Let $\beta > 0$, $t \leq c_q(1-3\beta)$ and let $p$ be a positive real number satisfying
$$q^{n\left( -\frac{1}{2}+\frac{t(C_q-1)}{2} -\frac{\beta}{2} \right)} \leq p \leq 1.$$
Let $B$ be a random subset of $\mathbb{F}_{q}^{n}$ with the events $x \in B$ being independent with probability $\mathbb{P}(x \in B) = p$. Then, with probability $1 - o_{n \to \infty}(1)$ we have that
$$f_{3}(B) \ll pq^{n\left(1-t +2\beta\right)}.$$
In particular, for all $\epsilon>0$, there exists $\delta(\epsilon,q):= \delta >0$ such that if $B$ is defined as above with $p=q^{n(-\frac{1}{2}+\epsilon)}$, then with probability $1 - o_{n \to \infty}(1)$,
\[
f_3(B) \ll |B|^{1-\delta}.
\]

\bigskip

\begin{proof}[Proof of Theorem \ref{thm:CG'}] For convenience, define $m=pq^{n\left(1-t +2\beta\right)}$, and let $\mathcal C$ be the container set guaranteed by Lemma \ref{lem:containers}. We first note that the probability that $B$ contains a three-term progression-free subset of size at least $m$ is upper bounded by
\begin{equation} \label{probbound}
 |\mathcal C| \binom{q^{n\left (1-t\right)}}{m} p^m.
\end{equation}
This is because a 3-AP free set of size $m$ must be contained in some $A \in \mathcal C$, and each subset of size $m$ belongs to the random subset $B$ with probability $p^m$. Every $A \in \mathcal C$ has size
\[
|A| \leq q^{n\left (1-t\right)},
\]
and so the number of possible candidates for a 3-AP free set of size $m$ is at most
\[
 |\mathcal C| \binom{q^{n\left (1-t\right)}}{m}.
\]
An application of the union bound then gives \eqref{probbound}. Using the bound
$$|\mathcal C| \leq  2^{n^2c(q,\beta) q^{n\left(\frac{1}{2}+\frac{t(C_q-3)}{2}+\beta\right)}},$$
and the standard binomial coefficient estimate $\binom{s}{t} \leq \left(\frac{es}{t} \right)^t$ gives
\begin{eqnarray} |\mathcal C| \binom{q^{n\left (1-t\right)}}{m} p^m &\leq& 2^{n^2c(q,\beta)q^{n\left(\frac{1}{2}+\frac{t(C_q-3)}{2}+\beta\right )}} \left( \frac{epq^{n\left (1-t \right)}}{m}\right )^m \nonumber \\
&=& 2^{n^2c(q,\beta)q^{n\left(\frac{1}{2}+\frac{t(C_q-3)}{2}+\beta\right )}} \left( \frac{e}{q^{2\beta n}}\right )^m \nonumber  \\
&\leq& \left( \frac{2e}{q^{2\beta n}}\right )^m. \label{final}
\end{eqnarray}
In the last inequality above, we have used the fact that for $n$ sufficiently large, 
\[
m=pq^{n\left(1-t +2\beta\right)}  \geq q^{n \left(\frac{1}{2}+\frac{t(C_q-3)}{2} +\frac{3}{2}\beta \right)}
\geq n^2c(q,\beta)q^{n\left(\frac{1}{2}+\frac{t(C_q-3)}{2}+\beta \right )}.
\]
The lower bound on $p$ in the statement of the theorem was used here. The quantity in \eqref{final} tends to zero as $n$ goes to infinity, which completes the proof of the first part of the statement.

The second statement follows from the first by taking 
\[ t=\frac{2\epsilon}{C_q-1},\,\,\,\, \beta=t/4.
\]
Indeed, for suitably chosen constants $c, C>0$, the statement
\[cpq^n \leq |B| \leq C pq^n = C q^{n(\frac{1}{2}+\epsilon)}
\]
is true with probability $1 - o_{n \to \infty}(1)$. Therefore, with probability $1 - o_{n \to \infty}(1)$, we have
\[
f_3(B) \ll pq^{n(1-\frac{t}{2})} \ll |B|q^{n(-\frac{t}{2})} \ll_{\epsilon} |B|^{1-\delta(\epsilon)}.
\]
\end{proof}

\smallskip

We finally use Theorem \ref{thm:CG'} to deduce Theorem \ref{thm:main}.

\smallskip

\begin{proof}[Proof of Theorem \ref{thm:main}]
Construct a subset $P \subset \mathbb F_q^n$ by choosing elements independently at random with probability $p=\frac{1}{100} q^{-n/3}$. The expected number of elements in $P$ is $pq^n=\frac{1}{100}q^{2n/3}$, while the expected number of nontrivial four-term progressions is at most $p^4q^{2n}=10^{-8}q^{2n/3}$. Indeed, the latter follows from the fact that $\mathbb F_q^n$ contains less than $q^{2n}$ non-trivial 4-APs and each one survives the random process with probability $p^4$. In particular, the expected number of elements of $P$ is considerably larger than the expected number of 4-APs. Therefore, with high probability both
\[
|P| \geq \frac{1}{1000}q^{2n/3}
\]
and
\[
|\{ \text{all non-trivial 4-APs in } P\}| \leq \frac{1}{2000}q^{2n/3}
\]
hold. We can then delete one element from each 4-AP and obtain a set $P'$ with size $\Omega(q^{2n/3})$ which has no nontrivial four-term progressions.

On the other hand, we can apply Theorem \ref{thm:CG'} with $t=\frac{1}{3(C_q-1)}$ and the above choice of $p$, as these values satisfy the required conditions provided that $n$ is sufficiently large. Therefore, with probability tending to $1$ as $n$ goes to infinity, the randomly constructed set $P$ satisfies
\[
f_3(P) \leq pq^{n\left(1-\frac{1}{3(C_q-1)} +2\beta\right)} \ll  q^{n\left(\frac{2}{3}-\frac{1}{3(C_q-2)}+2\beta\right)}.
\]

Now, for every positive integer $m$, $P'$ contains a three-term progression-free set of size $m$ only if $P$ also does. That is, $f_3(P') \leq f_3(P)$. Therefore,
\[
f_3(P') \leq f_3(P) \ll  q^{n\left(\frac{2}{3}-\frac{1}{3(C_q-2)} +2\beta\right )} \ll |P'|^{1-\frac{1}{2(C_q-2)}+3\beta}.
\]
This completes the proof.
\end{proof}

\smallskip

%We next move on to the proof of Theorem \ref{thm:integers}.

\section{Proof of Theorem \ref{thm:integers}}

\bigskip

We will prove the following more general result which involves the parameter $r_3(N)$.

\smallskip

\begin{proposition} \label{thm:integersgen}
Suppose that for all sufficiently large $N \in \mathbb N$ we have $r_3(N) \leq \frac{N}{h(N)}$ for some monotone increasing and invertible function $h: \mathbb [1, \infty) \rightarrow \mathbb [1, \infty )$. Suppose also that $h$ satisfies the following technical conditions:
\begin{itemize}
    \item For all $x \in \mathbb [1,\infty)$, $h(x) \leq x$.
    \item There exists an absolute constant $\gamma$ such that for all $N$ sufficiently large
      \begin{equation} \label{technical1}
    h\left (\frac{N^{1/5}}{1000}\right) \geq 4h(N^{\gamma}), \,\, \text{and}
    \end{equation}
    \begin{equation} \label{technical}
    N^{1/10} \geq [h(N^{\gamma})]^{3/2} [h^{-1}( 4h(N^{\gamma}))]^2.
    \end{equation}
\end{itemize}
Then for all $\alpha>0$ and for all $n$ sufficiently large (depending on $\alpha$), there exists a four-term progression-free set $A \subset \mathbb N$ with cardinality $n$ such that
\[
f_3(A) \ll \frac{n}{[h(n^{\frac{3}{2}\gamma})]^{1-\alpha}}.
\]

\end{proposition}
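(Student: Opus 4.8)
The strategy is to mimic the $\mathbb{F}_q^n$ argument: first prove an analogue of the container lemma (Lemma~\ref{lem:containers}) over $[M]$, using Theorem~\ref{thm:varnavides} as the supersaturation input in place of Corollary~\ref{thm:supersaturation}, then feed the resulting container family into a union-bound argument over a random subset, and finally pass from the random ambient set to an honest $4$-AP-free set by deleting one point per $4$-AP. I would fix an ambient interval $[M]$ with $M = n^{3\gamma/2}$ (this is why the bound in the statement features $h(n^{3\gamma/2})$), build the $3$-uniform hypergraph $H([M])$ whose edges are $3$-APs, and iterate Theorem~\ref{thm:container} exactly as in Lemma~\ref{lem:containers}. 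The technical hypotheses \eqref{technical1} and \eqref{technical} are precisely the conditions that let the iteration start (the first gives room for the parameter $\epsilon$ so that removing $\epsilon M$ points still leaves a $3$-AP by the hypothesis $r_3\le N/h(N)$) and that control $\Delta(H,\tau)$ — one needs $\Delta_2 \le 2$, $\Delta_3 \le 1$, and a lower bound on the average degree $d \gg M^2/[h(M^{\gamma})]^{?}$ coming from Varnavides, and \eqref{technical} is the inequality that forces $\Delta(H,\tau) = O(4\Delta_3/(d\tau^2))$ to be small enough. One chooses $\tau$ comparable to $M^{-1/10}$ or so and $\epsilon$ a small constant (or mild power), runs the iteration for $O(1)$ steps since each step shrinks the edge count by a factor $\epsilon$, and ends with a family $\mathcal{C}$ of size $2^{o(M)}$ — in fact $\log|\mathcal{C}| \ll M\tau\log(1/\epsilon)\log(1/\tau) = o(M / h(M^\gamma))$ — every member of which has size at most roughly $M/h(M^{\gamma})$ (the Elkin/Bloom--Sisask-type barrier below which no $3$-AP is forced).

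**The random reduction.**
Having the container family $\mathcal{C}$, take a random subset $B \subset [M]$ with each element included independently with probability $p$ chosen so that $pM \approx n$, i.e. $p \approx n/M = M^{-1/3}$ roughly; then $|B| \approx n$ with high probability, and the expected number of $4$-APs in $B$ is at most $p^4 M^2 = o(pM) = o(n)$, so after deleting one point per $4$-AP we have a $4$-AP-free set $A$ of size $\Omega(n)$. For the $f_3$ bound, a $3$-AP-free subset of $B$ of size $m$ must lie in some $A_0 \in \mathcal{C}$ with $|A_0| \le M/h(M^\gamma)$, so by the union bound the probability that $B$ contains such a subset is at most $|\mathcal{C}|\binom{M/h(M^\gamma)}{m}p^m \le |\mathcal{C}| (ep M/(m\,h(M^\gamma)))^m$. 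Choosing $m \asymp n/[h(M^\gamma)]^{1-\alpha}$ and using $\log|\mathcal{C}| = o(m)$, this is $(e p M / (m h(M^\gamma)))^m \cdot 2^{o(m)} = ([h(M^\gamma)]^{-\alpha} e\, 2^{o(1)})^m \to 0$; so with high probability $f_3(B) \ll n/[h(M^\gamma)]^{1-\alpha}$, and since $f_3(A) \le f_3(B)$ and $M = n^{3\gamma/2}$, we get $f_3(A) \ll n/[h(n^{3\gamma/2})]^{1-\alpha}$ as claimed. Theorem~\ref{thm:integers} then follows by plugging in the Bloom--Sisask bound $h(N) = (\log N)^{1+c}$, checking the technical conditions (both \eqref{technical1} and \eqref{technical} are easy for poly-logarithmic $h$, since $h^{-1}$ is roughly $\exp$ of a power and $N^{1/10}$ crushes any polylog factor), and absorbing constants into a slightly smaller $c'$.

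**The main obstacle.**
The genuinely delicate part is the container iteration over $[M]$ — specifically, verifying the codegree/average-degree hypothesis $\Delta(H,\tau) \le \epsilon/(12 r!)$ at \emph{every} level of the iteration, not just the first. At deeper levels one is applying Theorem~\ref{thm:container} to $H(A_0)$ for a container $A_0$ that is only known to be "somewhat dense" (edge count reduced by $\epsilon^j$), and to rerun the argument one needs Varnavides-type supersaturation \emph{inside} $A_0$, i.e. a lower bound on the average degree of $H(A_0)$ in terms of $|A_0|$; this is where Theorem~\ref{thm:varnavides} and the monotonicity/invertibility of $h$ must be invoked uniformly, and where the exact shape of the technical conditions \eqref{technical1}--\eqref{technical} is forced. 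Getting the bookkeeping right so that (i) the iteration terminates in $O(1)$ steps with all containers of size $\le M/h(M^\gamma)$, and (ii) the accumulated $\log|\mathcal{C}|$ stays $o(m)$ for the target $m$, is the crux; everything after that is a routine union bound. A secondary, purely arithmetic nuisance is tracking how $h$, $h^{-1}$, $M=n^{3\gamma/2}$ and the exponent $3\gamma/2$ interact, but the statement has been set up (via the $n^{3\gamma/2}$ in the conclusion) precisely so these match.
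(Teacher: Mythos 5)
Your overall architecture matches the paper's: build the $3$-uniform hypergraph of $3$-APs on an interval, use Theorem \ref{thm:varnavides} as the supersaturation input to Theorem \ref{thm:container}, sparsify with a random subset of density about $M^{-1/3}$, delete one point per $4$-AP, and finish with a union bound over the containers. But as written your parameters do not cohere. You take the ambient interval $[M]$ with $M=n^{3\gamma/2}$, yet you also require $pM\approx n$ with $p\approx M^{-1/3}$, which forces $M\approx n^{3/2}$; with the intended small absolute constant $\gamma$ (e.g.\ $\gamma=1/100$ in the application to Theorem \ref{thm:integers}) the interval $[n^{3\gamma/2}]$ cannot even contain a set of size $n$. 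Moreover your containers then have size at most $M/h(M^{\gamma})$, and with $M=n^{3\gamma/2}$ this is $h(n^{3\gamma^{2}/2})$, not the claimed $h(n^{3\gamma/2})$. The correct bookkeeping, as in the paper, is: ambient interval $[N]$, final $4$-AP-free set of size $n=\Theta(N^{2/3})$, density threshold $\eta=1/h(N^{\gamma})$, so that the conclusion features $h(N^{\gamma})\approx h(n^{3\gamma/2})$. This is a fixable slip, but it is a real one.

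The second gap is in the container construction itself. The paper makes a \emph{single} application of Theorem \ref{thm:container}, choosing $\epsilon:=\eta/\bigl(h^{-1}(4/\eta)\bigr)^{4}$ (the Varnavides count for density-$\eta$ sets) and $\tau\approx(N\epsilon)^{-1/2}$; then every container automatically has size at most $\eta N$ by Theorem \ref{thm:varnavides}, and the hypotheses \eqref{technical1} and \eqref{technical} are exactly what is needed to verify $\tau<1/21600$ and $\log|\mathcal C|\le m$. The paper explicitly notes that the iteration used in the $\mathbb F_q^n$ setting brings negligible gains here. Your iterated scheme with ``$\epsilon$ a small constant'' and ``$O(1)$ steps'' does not reach the required threshold: for $h$ of Bloom--Sisask type, $h^{-1}(4h(N^{\gamma}))$ is a power of $N$, so the Varnavides edge-count threshold is a negative power of $N$, and shrinking the edge count by a constant factor per round would need $\Theta(\log N)$ rounds, at each of which you must re-verify supersaturation inside the current container -- precisely the point you flag as unresolved. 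Taking $\epsilon$ to be the mild power above and applying the container theorem once removes this difficulty entirely. (Also, $\Delta_2\le 3$ rather than $2$; this is harmless.)
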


\begin{proof}[Proof of Theorem \ref{thm:integers} from Proposition \ref{thm:integersgen}]

Note that the rather complicated looking statement of Proposition \ref{thm:integersgen} does imply the upper bound from Theorem \ref{thm:integers}. Indeed, because of Bloom and Sisask's recent upper bound on $r_3(N)$ in \eqref{r3}, we can apply Proposition \ref{thm:integersgen} with 
\[
h(x)=\frac{1}{C}(\log x)^{1+c}, \,\,\,\,\,\,\,\,\, \alpha=\frac{c}{2(1+c)},
\] where $c$ is the constant given in \eqref{r3} and $C$ is the multiplicative constant hidden in the $\ll$ notation therein. One can check by direct calculation that $h$ does indeed satisfy the conditions of Theorem \ref{thm:integersgen}, with room to spare, if we take $\gamma=\frac{1}{100}$, and so
\[
f_3(A) \leq \frac{C'n}{(\log n)^{(1+c)(1-\alpha)}}=\frac{C'n}{(\log n)^{(1+\frac{c}{2})}}.
\]
This completes the proof of Theorem \ref{thm:integers} with $c'=c/2$.

\end{proof}

%We will prove Proposition \ref{thm:integersgen} first, and then later in the section prove that the set $A$ it outputs for Theorem \ref{thm:integers} also satisfies the required lower bound on $f_{3}(A)$.

\bigskip

\begin{proof}[Proof of Proposition \ref{thm:integersgen}]

The proof is similar to that of Theorem \ref{thm:main}, although the calculations are more taxing. On the other hand, this proof is a little more straightforward, since we make just a single application of the container theorem. We remark that this approach with a single application was also possible in the proof of Theorem \ref{thm:main}, but the iterative approach gave a better quantitative result. However, the quantitative gains of the iterative approach seem to be negligible in the integer case.

Once again, we define a $3$-uniform hypergraph which encodes three term arithmetic progressions. This hypergraph $H$ has vertex set $[N]$, and three distinct elements of $[N]$ form an edge if they form an arithmetic progression.

Note that the average degree $d$ of this hypergraph is at least $N/9$, since there are at least $N^2/9$ edges. Indeed, if we take any two distinct integers $a,b \in [1,N/2]$ with $a<b$, there exists a third integer $c=2b-a \in [1,N]$ such that $\{a,b,c\}$ forms an arithmetic progression. This shows the existence of at least
\[
\binom{\lfloor \frac{N}{2} \rfloor}{2} > \frac{N^2}{9}
\]
non-trivial $3$-APs, where the latter inequality holds provided that $N$ is sufficiently large. Also, as in the proof of Theorem \ref{thm:main}, we have $\Delta_2 \leq 3$ and $\Delta_3=1$.

\begin{comment}
Given $\alpha >0$ in the statement of the theorem, let $h$ be the function defined in Theorem \ref{thm:varnavides} with $\delta= \alpha/2$. Fix
\[ 
\eta:= \frac{1}{\log^{1- \alpha} N}
\]
and define 
\[ 
\epsilon:= h( \eta), \,\,\,\,\,\,\, \tau:= \frac{1000}{(N h(\eta))^{1/2}}.
\]
\end{comment}

%\fxnote{Do we need to add explanations to justify that $\tau, \epsilon<1/2$ hold? Perhaps we should to be super rigorous, but it adds more ugly calculations to the proof.}
Fix
\[ 
\eta:= \frac{1}{h(N^{\gamma})},
\]
where $\gamma$ is the constant in the statement of Proposition \ref{thm:integersgen}. Define 
\[ 
\epsilon:= \frac{\eta}{\left(h^{-1}(\frac{4}{\eta}) \right)^4}, \,\,\,\,\,\,\, \tau:= \frac{100}{(N \epsilon)^{1/2}}.
\]

We would like to apply Theorem \ref{thm:container} with these parameters. In order to do this, we need to check that the conditions 
\begin{equation} \label{cond1}
    \tau < 1/(200 \cdot 3 \cdot 3!^2)= 1/21600
    \end{equation}
and    
    \begin{equation} \label{cond2}
   \Delta(H, \tau) \leq \frac{\epsilon}{72}
    \end{equation}
    hold. 
    
    %The first of these conditions will certainly hold if we take $N$ sufficiently large.
    
For \eqref{cond1} to hold, it would be enough to verify that
\begin{equation}
    \label{Nepsilon}
N \epsilon \geq 10^{12}.
\end{equation}
That is,
\begin{equation} \label{aim}
\frac{\eta}{\left(h^{-1}(\frac{4}{\eta}) \right)^4} \geq \frac{10^{12}}{N}.
\end{equation}

Because of the assumption that $h(x) \leq x$ for all $x \in \mathbb R^+$, it follows that  $h^{-1}(x) \geq x$ and in particular
\begin{equation} \label{messy}
    \frac{1}{x} \geq \frac{1}{h^{-1}(x)}.
\end{equation}
Applying \eqref{messy} with $x=\frac{4}{\eta}$, it follows that
\[
\frac{\eta}{\left(h^{-1}(\frac{4}{\eta}) \right)^4}= 4\frac{\frac{\eta}{4}}{\left(h^{-1}(\frac{4}{\eta}) \right)^4} \geq 4 \frac{1}{\left(h^{-1}(\frac{4}{\eta}) \right)^5},
\]
so that \eqref{aim} would hold as long as 
\[
\frac{1}{\left(h^{-1}(\frac{4}{\eta}) \right)^5} \geq \frac{10^{12}}{N}.
\]
Since $h$ is monotone increasing, this can be rearranged to give
\[\eta \geq \frac{4}{h\left( \frac{N^{1/5}}{10^{12/5}} \right)}.
\]
The latter inequality holds for our choice of $\eta$. Here we have used the condition \eqref{technical1} in the statement of the theorem. This implies that \eqref{aim} holds, and therefore so does \eqref{cond1}.

%\fxnote{Need to add in some conditions on $h$ to the statement. Here we use unboundedness. earlier we used the assumption that $h(x) \leq x$.}

For \eqref{cond2} to hold, we need to verify that
\begin{equation} \label{eq:condition2}
 \frac{4\cdot 9 \cdot 3}{N \tau} + \frac{2\cdot 9}{N \tau^2}  \leq \frac{\epsilon}{72}.
\end{equation}
Since $\tau <1$, it will be sufficient to check that $\frac{126}{N\tau^2} \leq \frac{\epsilon}{72}$. By the earlier choice of $\tau$, this is equivalent to $100^2 \geq 72 \cdot 126$, which is indeed true.

%\fxnote{Again, I am being sketchy here with constants and verifying the conditions.}

Theorem \ref{thm:container} then gives a collection $\mathcal C$ of subsets of $[N]$ such that
\begin{itemize}
    \item $|\mathcal C| \leq 2^{c\tau N\log(\frac{1}{\tau}) \log (\frac{1}{\epsilon})} $  ,
    \item for all $A \in \mathcal C$, $e(H[A]) \leq \epsilon e(H)$,
    \item if $X \subseteq [N]$ is an independent set in $H$, then there is some $A \in \mathcal C$ such that $X \subseteq A$.
\end{itemize}

%leq 2^{c'\log^2 N  \left(\frac{N}{\epsilon}\right)^{1/2}}
It follows from the second fact above and Theorem \ref{thm:varnavides} that $|A| \leq \eta N$ for all $A \in \mathcal C$. Note here that the condition of Theorem \ref{thm:varnavides} follows from condition \eqref{technical}.

Observe that, for $N$ sufficiently large
\[
\frac{1}{\tau}, \frac{1}{\epsilon} \leq N.
\]
The first of these inequalities follows from the fact that $\epsilon <\frac{1}{2}$, while the second is a consequence of \eqref{Nepsilon}. Using these two inequalities and the definition of $\tau$, gives the bound
\begin{equation} \label{Cbound}
|\mathcal C| \leq 2^{c'(\log N )^2 \left(\frac{N}{\epsilon}\right)^{1/2}}.
\end{equation}

Construct a subset $P \subset [N]$ by choosing elements independently at random with probability $p$. The expected number of elements in $P$ is $pN$. The expected number of four-term arithmetic progressions is at most $p^4N^2$. Therefore, if we choose $p=\frac{1}{100} N^{-1/3}$ then with high probability the number of elements will be much larger than the number of four-term arithmetic progressions. We can then delete one element from each $4$-AP and obtain a set $P'$ with size $\Theta(N^{2/3})$ which has no $4$-APs. Just as was the case in the proof of Theorem \ref{thm:main}, note here that $f_3(P') \leq f_3(P)$.

Now, we claim that it is unlikely that $H(P)$ contains an independent set of cardinality $m= N^{2/3}\eta^{1-\alpha}$. Indeed, note that
$$\mathbb P [H(P)\text{ contains an independent set of size }m] \leq |\mathcal C| \binom{N\eta}{m} p^m,$$
whereas, by using the bound on $|\mathcal C|$ from \eqref{Cbound} together with standard binomial coefficient estimates, we also have that
$$|\mathcal C| \binom{N\eta}{m} p^m \leq 2^{c'(\log N)^2  \left(\frac{N}{\epsilon}\right)^{1/2}}  \left( \frac{eN\eta}{mN^{1/3}} \right)^m = 2^{c'(\log N)^2  \left(\frac{N}{\epsilon}\right)^{1/2}}  (e \eta^{\alpha})^m .
%\\& = 2^{c'\log^2 N  \left(\frac{N}{\epsilon}\right)^{1/2}}  \left(\frac{e} {\log N^{\alpha(1-\alpha)}}\right)^m.
$$
With the choices we have made for $\eta$ and $m$, it follows that the bound
\[
c'(\log N)^2  \left(\frac{N}{\epsilon}\right)^{1/2} \leq m
\]
holds for $N$ sufficiently large. This is the point at which we have used the technical condition \eqref{technical} in the statement of Proposition \ref{thm:integersgen}. Therefore, the probability that $H(P)$ contains an independent set of size $m$ is less than $\left(2e \eta^{\alpha}\right)^m$, which becomes arbitrarily small as $N$ gets arbitrarily large. So, with high probability, $H(P)$ does not contain any independent sets of this size, and thus neither does the induced subhypergraph $H(P')$.

It follows that there exists a $4$-AP free set $P'$ of size $\Theta(N^{2/3})$ with the property that all of its subsets of size at least $N^{2/3}\eta^{1-\alpha}$ contain a $3$-AP. That is,
\[
f(|P'|) \ll |P'| \eta^{1-\alpha} \approx \frac{|P'|}{\left(h(|P'|^{\frac{3}{2}\gamma})\right)^{1-\alpha}}.
\]
This completes the proof of Proposition \ref{thm:integersgen}. 
\end{proof}

\smallskip

\section{Sets with small energy but rich in progressions}

\bigskip

In this section, we discuss another application of Theorem \ref{thm:CG'}, in connection with a different type of generalization of Roth's theorem, first observed by Sanders \cite{Sanders}. 

\smallskip

\begin{theorem}
\label{Sanders}
Let $\delta > 0$ and suppose that $A \subset \mathbb{Z}$ has at least $\delta|A|^{3}$ additive quadruples. Then, there exist absolute constants $c,C>0$ such that $A$ contains at least $\exp(-C\delta^{-c}) \cdot |A|^{2}$ three-term arithmetic progressions.
\end{theorem}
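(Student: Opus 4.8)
The plan is to reduce to a set with small doubling via the Balog--Szemer\'edi--Gowers theorem, transport the problem into a cyclic group by Ruzsa's modelling lemma, and then finish with a quantitative Roth/Varnavides count; all the ingredients are classical (see, e.g., \cite{tv}), so the real work is in tracking the polynomial-in-$\delta$ losses. First I would observe that the hypothesis says exactly that the additive energy of $A$ -- the number of quadruples $(a_1,a_2,a_3,a_4)\in A^4$ with $a_1+a_2=a_3+a_4$ -- is at least $\delta|A|^3$. Applying the Balog--Szemer\'edi--Gowers theorem in its quantitative form (with polynomial dependence on $\delta$) produces a subset $A'\subseteq A$ with $|A'|\geq c_1\delta^{C_1}|A|$ and $|A'-A'|\leq c_2\delta^{-C_2}|A'|$ for absolute constants $c_1,c_2,C_1,C_2>0$. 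Since $A'\subseteq A$, it suffices to lower-bound the number of nontrivial three-term progressions in $A'$.

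Next I would model $A'$ inside a cyclic group. By the Pl\"unnecke--Ruzsa inequality one has $|2A'-2A'|\leq \delta^{-O(1)}|A'|$, so Ruzsa's modelling lemma -- with modelling order $2$, which is enough to transport the relation $x+z=2y$ defining a three-term progression -- provides a prime $m\leq \delta^{-O(1)}|A'|$ and a Freiman $2$-isomorphism $\phi$ from $A'$ onto a set $\tilde A\subseteq \mathbb Z/m\mathbb Z$. Hence $\tilde A$ has density $\alpha:=|\tilde A|/m\geq \delta^{O(1)}$, and -- because $\phi$ is a Freiman $2$-isomorphism and $m$ is prime, so that a nontrivial progression in $\mathbb Z/m\mathbb Z$ automatically has three distinct points -- every nontrivial three-term progression in $\tilde A$ pulls back under $\phi^{-1}$ to a nontrivial three-term progression in $A'$, with distinct progressions pulling back to distinct ones. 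This faithful transport of progressions (including the degenerate cases) is the one place that needs a little care, but it is immediate from the definitions.

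Finally I would count three-term progressions in $\tilde A$ by a Varnavides-type averaging argument in the spirit of Theorem~\ref{thm:varnavides}: choose a length $L=L(\alpha)=\exp(\Theta(\alpha^{-c''}))$ with $c''<1$ for which $r_3(L)<\tfrac{\alpha}{2}L$ -- possible thanks to the Bloom--Sisask bound $r_3(N)\ll N/(\log N)^{1+c}$ of \eqref{r3}, transferred to $\mathbb Z/m\mathbb Z$ (routine) -- and average over the $\Theta(m^2)$ length-$L$ progressions of $\mathbb Z/m\mathbb Z$, each of which is Freiman-isomorphic to an interval of length $L$. A positive proportion of these contain at least $\tfrac{\alpha}{2}L\geq r_3(L)$ points of $\tilde A$, hence a nontrivial progression, and each progression is counted at most $L^2$ times, giving at least $\exp(-C'\alpha^{-c'})\,m^2$ nontrivial three-term progressions in $\tilde A$. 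Combining the three steps and using $m\geq |A'|\geq c_1\delta^{C_1}|A|$ together with $\alpha\geq\delta^{O(1)}$, the number of nontrivial three-term progressions in $A$ is at least
\[
\exp(-C'\alpha^{-c'})\,m^2 \;\geq\; \exp(-C'\alpha^{-c'})\,|A'|^2 \;\geq\; \exp(-C\delta^{-c})\,|A|^2,
\]
after absorbing the polynomial-in-$\delta$ factors into the exponential; the inequality is vacuous (the right-hand side being $<1$) unless $|A|$ is large compared with $\delta^{-1}$, as it must be for the hypothesis to be non-trivial -- e.g.\ for Sidon sets.

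There is no deep obstacle here: the statement is essentially a repackaging of the Balog--Szemer\'edi--Gowers machinery followed by a quantitative Roth input. The main thing to get right is the bookkeeping of the polynomial losses through Balog--Szemer\'edi--Gowers, Pl\"unnecke--Ruzsa and the modelling lemma so that the final bound has the claimed shape $\exp(-C\delta^{-c})$; in particular it is essential that the Roth input be of the form $\exp(-\mathrm{poly}(1/\alpha))$ rather than, say, a tower in $1/\alpha$, since $1/\alpha$ is only polynomially large in $1/\delta$.
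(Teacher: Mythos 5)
Your proposal is correct and follows essentially the same route the paper indicates for this statement (which it attributes to Sanders and explicitly describes as Balog--Szemer\'edi--Gowers plus the fact that sets of small doubling contain many three-term progressions): BSG to pass to a dense subset $A'$ of small doubling, Pl\"unnecke--Ruzsa and Ruzsa modelling into $\mathbb{Z}/m\mathbb{Z}$, and a Varnavides-type count driven by a Roth bound of the shape $r_3(N)\ll N/(\log N)^{\kappa}$, which is exactly what yields the $\exp(-C\delta^{-c})$ form. The only small correction is that Ruzsa's modelling lemma Freiman $2$-embeds only a subset of $A'$ of proportional size (about half), not all of $A'$, but this loss is absorbed into the constants exactly like the other polynomial-in-$\delta$ factors.
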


\smallskip

Here an additive quadruple means a solution to $a+b=c+d$ with all $a,b,c,d$ in $A$. The number of such quadruples is usually denoted by $E(A)$ and called the {\it{additive energy}} of $A$. Theorem \ref{Sanders} says that sets with large energy have many three-term arithmetic progressions. This follows from the Balog-Szemer\'edi-Gowers theorem (see \cite{Gowers} or \cite{tv}) and the fact that sets with small sumsets have many three-term arithmetic progressions, a consequence of Roth's theorem. Results like the latter hold in general abelian groups $G$ and quantitative versions were also studied by Henriot in \cite{Henriot}. For our purposes, the groups of interest are $G = \mathbb{Z}$ and $G = \mathbb{F}_{q}^n$, so we begin by recording an improvement (and generalisation) of a theorem of Henriot \cite[Theorem 6]{Henriot}, which may be of independent interest, and which is meant to illustrate a phenomenon similar to the one described by Theorem \ref{Sanders} (with better quantitative bounds).

\smallskip

\begin{theorem}
\label{Henriot}
Let $A \subset \mathbb{F}_{q}^n$ be such that $|A+A| \leq K|A|$ for some $K > 0$. Then, $A$ contains at least $(qK^{4})^{2-C_{q}} \cdot |A|^{2}$ three-term arithmetic progressions.
\end{theorem}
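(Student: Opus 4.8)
The plan is to reduce Theorem~\ref{Henriot} to the supersaturation Corollary~\ref{thm:supersaturation} by a standard Balog--Szemer\'edi--Gowers-free argument exploiting the special structure of $\mathbb{F}_q^n$: namely that small doubling forces $A$ to be efficiently covered by (a bounded-index sublattice, i.e.) a coset of a subspace of controlled size. First I would invoke the Freiman--Ruzsa theorem in $\mathbb{F}_q^n$: if $|A+A| \le K|A|$ then $A$ is contained in a coset $x_0 + V$ of a subspace $V$ with $|V| \le F(K) \cdot |A|$ for an explicit function $F(K)$; in characteristic-$p$ vector spaces one has the clean bound $|V| \le K^{2} q^{K^{4}} |A|$ (Ruzsa's argument, or the polynomial Freiman--Ruzsa estimates), and one can tune the exponents so that ultimately $|V| \le (qK^4)^{C} |A|$ for a suitable absolute $C$. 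The point of passing to $V$ is that 3-APs are preserved under translation, so it suffices to count 3-APs inside the translate $A - x_0 \subset V$, and $V$ is itself isomorphic to $\mathbb{F}_q^{n'}$ for some $n' \le n$.

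Next I would apply Corollary~\ref{thm:supersaturation} inside $V \cong \mathbb{F}_q^{n'}$. Writing $|A| = q^{n'(1-s)}$ relative to the ambient $|V| = q^{n'}$, the density of $A$ in $V$ is $q^{-n's}$, which is bounded below by $(qK^4)^{-C}$ (up to constants) by the Freiman--Ruzsa covering. Provided $n'$ is large enough and $s < c_q$ — a regime one arranges, with the boundary cases $s \ge c_q$ or small $n'$ handled trivially since then $|A|$ is polynomially bounded in $K$ and the claimed bound $(qK^4)^{2-C_q}|A|^2$ is $O(1)$ or follows from a direct $3$-AP count — Corollary~\ref{thm:supersaturation} yields $\Omega_q\big(q^{n'(2-sC_q)}\big) = \Omega_q\big(|A|^2 \cdot q^{n's(2-C_q)}\big)$ nontrivial 3-APs in $A$. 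Since $q^{-n's} \approx |V|/|A| \ll (qK^4)^{C}$, we get $q^{n's(2-C_q)} = (q^{-n's})^{C_q - 2} \gg (qK^4)^{-C(C_q-2)}$, and after folding the absolute constants into the exponent of $qK^4$ this is exactly of the form $(qK^4)^{2-C_q}|A|^2$ once $C$ is chosen so that $C(C_q - 2) = C_q - 2$, i.e.\ by absorbing everything into a redefinition one arrives at the stated clean exponent $2 - C_q$ (the factor $4$ on $K$ and the factor $q$ are precisely what the Freiman--Ruzsa bound contributes).

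The main obstacle is bookkeeping the constants in the Freiman--Ruzsa step so that the final exponent is genuinely $2 - C_q$ applied to $qK^4$ rather than to some larger power of $K$; this requires using a version of Freiman--Ruzsa over $\mathbb{F}_q^n$ with the sharp covering exponent (so that $|V|/|A| \le qK^4$, not merely $\le$ some polynomial in $q^{K}$), which over $\mathbb{F}_2$ is classical and over general $\mathbb{F}_q$ follows from Ruzsa's covering lemma combined with the $q$-ary analogue of the Green--Ruzsa / Even-Zohar bounds. A secondary technical point is that Corollary~\ref{thm:supersaturation} requires $n'$ large and $s$ bounded away from $c_q$; I would dispatch the complementary ranges by noting that in those ranges $|A| \le (qK^4)^{O(1)}$, so the desired lower bound on the 3-AP count is a bounded quantity and can be certified by exhibiting a single nontrivial progression (e.g.\ via the Ellenberg--Gijswijt bound~\eqref{eq:EG} applied directly to $A$ once $|A|$ exceeds the threshold, and trivially otherwise). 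Modulo these constant-chasing issues the argument is short, and the only genuinely nontrivial input beyond the Freiman--Ruzsa structure theorem is Corollary~\ref{thm:supersaturation} itself.
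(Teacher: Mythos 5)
Your overall skeleton---reduce to the supersaturation bound of Corollary \ref{thm:supersaturation} via a Freiman--Ruzsa-type structural statement---matches the paper's strategy, but the specific structural input you propose does not exist, and this is a genuine gap. You ask for a coset $x_0+V$ of a subspace containing $A$ with $|V| \leq (qK^4)^{C}|A|$ for an absolute constant $C$, i.e.\ a containing subspace exceeding $|A|$ by only a factor polynomial in $K$. No such theorem holds over any $\mathbb{F}_q$: if $A$ is a linearly independent (dissociated) set of size $m$, then $|A+A| \leq K|A|$ with $K \approx m/2$, yet any coset of a subspace containing $A$ has size at least $q^{m-1} \geq q^{2K-1}$, so $|V|/|A|$ is forced to be exponential in $K$. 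This is exactly why Ruzsa's bound $|V| \leq K^{2}q^{K^{4}}|A|$ carries an exponential loss, why the Even-Zohar-type sharpenings only improve the constant in the exponent, and why even the polynomial Freiman--Ruzsa statement yields a covering of $A$ by $K^{O(1)}$ cosets of a subspace of size at most $|A|$ rather than containment in a single small subspace. Running your argument with what is actually available would replace the factor $(qK^{4})^{2-C_q}$ by something like $\bigl(q^{K^{4}}\bigr)^{2-C_q}$, exponentially weaker in $K$ than the claimed bound; the ``tuning of exponents'' you invoke is precisely the step that cannot be carried out.

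The way around this---and what the paper does---is to use the Freiman--Ruzsa \emph{modelling} lemma instead of a containment statement: since $|A+A| \leq K|A|$, the set $A$ is Freiman $2$-isomorphic to a subset of $G=\mathbb{F}_q^{m}$ with $|G| \leq qK^{4}|A|$ (see \cite[Lemma 5.6]{Sisask}). A Freiman $2$-isomorphism preserves all solutions of $x_1+x_2=y_1+y_2$ with entries in the set, in particular solutions of $x+z=y+y$, so three-term progressions are preserved in both directions and it suffices to count them in the model. There the density of the image of $A$ in $G$ is at least $(qK^{4})^{-1}$, so writing $|A|=q^{m(1-s)}$ one has $q^{ms}\leq qK^{4}$, and Corollary \ref{thm:supersaturation} gives at least $q^{m(2-sC_q)}=|A|^{2}\,(q^{ms})^{2-C_q}\geq |A|^{2}(qK^{4})^{2-C_q}$ nontrivial progressions, which is the stated bound with the clean factor $qK^{4}$ coming directly from the modelling lemma. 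The modelling lemma is the device that converts small doubling into a polynomial-in-$K$ density statement, which containment in a subspace cannot do; once it is used, the case analysis you sketch for the complementary ranges is essentially unnecessary (the only residual caveat, that the density must lie in the regime where Corollary \ref{thm:supersaturation} applies, is common to both formulations).
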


\smallskip

\begin{proof} For the reader's convenience, we recall that for any two commutative groups $G_{1}$, $G_{2}$ two sets $S \subset G_{1}$ and $T \subset G_{2}$ are said to be Freiman $s$-isomorphic if there exists a one to one map $\phi : S \to T$ such that for every $x_{1},\ldots,x_{s},y_{1},\ldots,y_{s}$ in $S$ (not necessarily distinct) the equation
$$x_{1}+\ldots+x_{s} = y_{1} + \ldots + y_{s}$$
holds if and only if
$$\phi(x_{1})+\ldots+\phi(x_{s}) = \phi(y_{1}) + \ldots + \phi(y_{s}).$$

Let $K = |A+A|/|A|$. By a finite field version of the so-called Freiman-Ruzsa modelling lemma (see for instance \cite[Lemma 5.6]{Sisask} for more details), $A$ is Freiman $2$-isomorphic to a subset of $G = \mathbb{F}_{q}^{m}$, where $|G| \leq q \cdot K^{4} |A|$. We identify this subset with $A$ since the Freiman $2$-isomorphisms preserves three-term progressions. By Corollary \ref{thm:supersaturation} applied inside $G$, it follows that $A$ contains at least $|A|^{2}(qK^{4})^{2-C_{q}}$ three-term arithmetic progressions, as claimed.
\end{proof}

Theorem \ref{Henriot}, combined with the Balog-Szemer\'edi-Gowers theorem, shows that subsets $A \subset \mathbb{F}_{q}^{n}$ must have many three-term progressions even if $E(A) \gg |A|^{3-\epsilon}$ for some $\epsilon > 0$ (which depends on $q$). A natural question now seems to be: if $A$ has large additive energy, does it also mean that $A$ must have nontrivial three-term progressions in all large subsets? A naive view is that Theorem \ref{Sanders} and Theorem \ref{Henriot} suggest that the answer could be yes. However, a simple counterexample already points towards the contrary: consider a set of $A$ where half of the elements form an additively structured set (like an arithmetic progression), while the other half consists of random elements. It is easy to check that $E(A) \gg |A|^{3}$ because the additively structured part has large energy, while there is no reason why the random part should contain any non-trivial three-term progressions. 

We will push this observation one step further and show next that for sets in $\mathbb{F}_{q}^{n}$ or $\mathbb{Z}$ the property of ``having nontrivial three-term progressions in all large subsets" is in fact entirely uncorrelated with the property of ``having large additive energy". 

\smallskip

\begin{theorem}
For all $\epsilon>0$ and any prime power $q$ there exists $\delta(\epsilon,q):=\delta>0$ and $n_0=n_0(\epsilon,q)$ such the following statement holds. For all $n \geq n_0$ there exists a set $A \subset \mathbb F_q^n$ with 
\[E(A) \leq |A|^{2+\epsilon}\]
and 
\[f_3(A) \ll |A|^{1-\delta}.\]
\end{theorem}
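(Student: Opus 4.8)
The plan is to build $A$ as the union of two pieces living in orthogonal coordinate subspaces: a highly structured part with large additive energy, and a ``Roth-rich'' random part of comparable size supplied by Theorem~\ref{thm:CG'}. Concretely, write $\mathbb F_q^n = \mathbb F_q^{n_1} \times \mathbb F_q^{n_2}$ with $n_1$ much smaller than $n_2$ (say $n_1 = \lceil \epsilon' n \rceil$ for a small $\epsilon'$ to be chosen, and $n_2 = n-n_1$). Let $V = \mathbb F_q^{n_1} \times \{0\}$ be the structured part — a full subspace, which maximizes energy: $E(V) = |V|^3$. Let $B \subseteq \{0\} \times \mathbb F_q^{n_2}$ be a random subset of the complementary subspace obtained by including each point independently with probability $p = q^{n_2(-1/2+\epsilon'')}$; by Theorem~\ref{thm:CG'} (second part), with probability $1-o_{n\to\infty}(1)$ we have $|B| = \Theta(q^{n_2(1/2+\epsilon'')})$ and $f_3(B) \ll |B|^{1-\delta_0}$ for some $\delta_0 = \delta_0(\epsilon'',q)>0$. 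Set $A = V \cup B$, and fix the exponents so that $|V|$ and $|B|$ are polynomially comparable, e.g. $q^{n_1} \approx q^{n_2(1/2+\epsilon'')}$, so that $|A| \approx q^{n_1} \approx |V| \approx |B|$.

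With this construction there are two things to verify. First, the energy bound: $E(A) \le \sum_{x_1,x_2,x_3,x_4 \in A} \mathbf 1[x_1+x_2 = x_3+x_4]$, and since $V$ and $B$ sit in complementary subspaces, an additive quadruple in $A$ must use the same number of elements from $V$ on each side (looking at the $\mathbb F_q^{n_1}$-coordinate) and likewise from $B$. A crude bound gives $E(A) \ll E(V) + |V|^2|B| + |V||B|^2 + E(B) \ll |V|^3 + |B|^3$ (using $E(B) \le |B|^3$), which is $\ll |A|^3$. That is too big — I need $E(A) \le |A|^{2+\epsilon}$, so the structured part cannot be a full subspace of size comparable to $|A|$; instead it must be \emph{much smaller}. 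So the right choice is to take $V$ of size only $|A|^{(2+\epsilon)/3}$-ish, i.e. $n_1$ chosen so that $q^{3n_1} \approx q^{(2+\epsilon)\cdot n_1}$... wait — that forces $n_1$ tiny. The clean fix: let $V$ be a subspace with $|V| = |A|^{\epsilon/2}$ (so $E(V) = |V|^3 = |A|^{3\epsilon/2}$, negligible) and let $B$ be the random Roth-rich set with $|B| = \Theta(|A|)$. Then $E(A) \ll |V|^3 + |V|^2|B| + |V||B|^2 + E(B) \ll |A|^{2+\epsilon}$, using $E(B) \ll |B|^{5/2}$ for a random set of this density (indeed for $p = q^{-n_2/2+o(n_2)}$ a random set has energy close to $|B|^2$ up to lower-order terms — the number of nontrivial additive quadruples is $\approx p^4 q^{2n_2} = |B|^4/q^{n_2} \ll |B|^{2+o(1)}$), so in fact $E(A) \le |A|^{2+\epsilon}$ comfortably.

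Second, the Roth-richness of all large subsets of $A$. Suppose $A' \subseteq A$ with $|A'| \ge |A|^{1-\delta}$. Split $A' = (A'\cap V) \cup (A'\cap B)$. Since $|V| = |A|^{\epsilon/2} = o(|A|^{1-\delta})$ once $\delta < 1-\epsilon/2$, the part $A'\cap B$ has size at least $|A'| - |V| \ge |A|^{1-\delta} - |A|^{\epsilon/2} \ge \tfrac12 |B|^{1-\delta'}$ for a slightly worse $\delta'$, using $|B| \approx |A|$. By the bound $f_3(B) \ll |B|^{1-\delta_0}$, any subset of $B$ of size $\gg |B|^{1-\delta_0}$ contains a nontrivial $3$-AP; choosing $\delta < \delta_0$ (hence $\delta' < \delta_0$ for $\delta$ small enough) guarantees $A'\cap B$, and therefore $A'$, contains a nontrivial $3$-AP. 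This gives $f_3(A) \ll |A|^{1-\delta}$.

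The main obstacle — and the only genuinely delicate point — is coordinating the three exponents ($n_1$, $n_2$, and the density parameter $\epsilon''$ feeding Theorem~\ref{thm:CG'}) so that simultaneously: $|B|$ concentrates to $\Theta(|A|)$ and dominates $|V|$; the energy contribution of $B$ (which for a random set of density $p$ behaves like $|B|^2 + |B|^4/q^{n_2}$) stays below $|A|^{2+\epsilon}$; and $f_3(B) \ll |B|^{1-\delta_0}$ with $\delta_0$ large enough that the loss incurred by discarding $A'\cap V$ still leaves $A'\cap B$ above the $f_3(B)$-threshold. All of these are satisfiable by taking $\epsilon'' = \epsilon''(\epsilon)$ small, $n_1/n \to 0$ appropriately, and $\delta = \delta(\epsilon,q)$ suitably small; verifying the inequalities is a routine but slightly fiddly bookkeeping exercise, and the random construction succeeds with probability $1-o_{n\to\infty}(1)$ by a union bound over the (bad) event from Theorem~\ref{thm:CG'} and a Chernoff bound for $|B|$.
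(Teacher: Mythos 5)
Your proposal is essentially the paper's own argument: the paper simply takes a random subset $P\subset\mathbb F_q^n$ of density $p=q^{n(-1/2+\epsilon/(4-2\epsilon))}$, observes that with high probability $E(P)\ll |P|^{2+\epsilon}$ (the expected number of additive quadruples being $p^4q^{3n}$) while Theorem \ref{thm:CG'} gives $f_3(P)\ll |P|^{1-\delta}$, which is exactly the mechanism driving your set $B$; your auxiliary subspace $V$, once shrunk to size $|A|^{\epsilon/2}$, is superfluous and only adds bookkeeping. Apart from minor slips (the expected quadruple count in $B$ is $p^4q^{3n_2}$, not $p^4q^{2n_2}$, though your expression $|B|^4/q^{n_2}$ is the correct one, and the high-probability upper bound on $E(B)$ should be justified, e.g.\ by Markov's inequality as the paper implicitly does), your argument is correct and follows the same route.
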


\smallskip

In other words, not only is it true that sets with large additive energy may have large subsets with no proper three-term progressions, but there also exist sets with low energy with the property that all their large subsets contain nontrivial three-term progressions. The proof uses again Theorem \ref{thm:CG'} and is similar to the proof of Theorem \ref{thm:main}.

\smallskip

\begin{proof}
Construct a subset $P \subset \mathbb F_q^n$ by choosing elements independently at random with probability $p= q^{n\left( -\frac{1}{2} + \frac{\epsilon}{4-2\epsilon}\right)}$. The expected number of elements in $P$ is $pq^n=q^{n\left (\frac{1}{2}+\frac{\epsilon}{4-2\epsilon} \right)}$.

The expected size of $E(P)$ is $p^4q^{3n}=q^{n\left ( 1+\frac{4\epsilon}{4-2\epsilon} \right )}$. Indeed, this follows from the fact that there are $q^{3n}$ solutions to the equation
\[
a+b=c+d,\,\,\,\,\,\,\, a,b,c,d \in \mathbb F_q^n
\]
 and each solution survives the random process with probability $p^4$. Therefore, with high probability both
\[
|P| \geq \frac{1}{100}q^{n\left (\frac{1}{2}+\frac{\epsilon}{4-2\epsilon} \right)}
\]
and
\[
E(P) \leq 100 q^{n\left ( 1+\frac{4\epsilon}{4-2\epsilon} \right )}
\]
hold. In particular, with high probability,
\[ 
E(P) \ll |P|^{2+\epsilon}.
\]

On the other hand, we can apply Theorem \ref{thm:CG'} with 
\[t=\frac{2\epsilon}{(4-2\epsilon)(C_q-1)},\,\,\,\,\,\,\, \beta= \frac{t}{4}.
\] 
The above choice of $p$ is admissible for these choices of $t$ and $\beta$. Therefore, with probability tending to $1$ as $n$ goes to infinity, the randomly constructed set $P$ satisfies
\[
f_3(P) \ll pq^{n(1-t+2\beta)} = pq^{n(1-\frac{t}{2})}=q^{n\left(\frac{1}{2}+\frac{\epsilon(C_q-2)}{(4-2\epsilon)(C_q-1)}\right)} = |P|^{1-\delta},
\]
where
\[
\delta=\frac{\epsilon}{2(C_q-1)}.
\]
This completes the proof.

\end{proof}

\smallskip

A similar statement can be established in the integer case, which we state without proof as follows.

\smallskip

\begin{theorem} \label{lowenergy}
For all $ \epsilon>0$ there exists $c>0$ and a set $A \subset \mathbb N$ such that
\[
E(A)\ll |A|^{2+\epsilon}
\]
and
\[
f_3(A) \ll_{\epsilon} \frac{|A|}{(\log |A|)^{1+c}}.
\]

\end{theorem}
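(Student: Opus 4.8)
The plan is to mimic the proof of Theorem \ref{thm:main} in the integer setting, replacing the role of Theorem \ref{thm:CG'} with that of Proposition \ref{thm:integersgen}, and tracking the additive energy of the random set along the way. First I would fix $N$ large and construct $P \subset [N]$ by including each element independently with probability $p = N^{-1/3+\kappa}$ for a small parameter $\kappa = \kappa(\epsilon) > 0$ to be pinned down. The expected size of $P$ is $pN = N^{2/3+\kappa}$, and the expected additive energy of $P$ is at most $p^4 N^2 = N^{2/3+4\kappa}$, since there are at most $N^2$ solutions to $a+b=c+d$ with $a,b,c,d \in [N]$ (for each of the $\le N^2$ choices of $a,b,c$ the value $d$ is determined), and each solution survives with probability $p^4$. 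A first-moment/concentration argument gives, with high probability, $|P| \gg N^{2/3+\kappa}$ and $E(P) \ll N^{2/3+4\kappa}$ simultaneously, whence $E(P) \ll |P|^{(2/3+4\kappa)/(2/3+\kappa)} = |P|^{2+\eta(\kappa)}$, and one checks that $\eta(\kappa) \to 0$ as $\kappa \to 0$, so choosing $\kappa$ small enough forces $E(P) \ll |P|^{2+\epsilon}$.

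The second and third steps run exactly as in the proof of Theorem \ref{thm:main}: first delete one element from each nontrivial $4$-AP in $P$. Since $[N]$ has at most $N^2$ four-term progressions, the expected number surviving is at most $p^4 N^2 = N^{2/3+4\kappa}$, which (for $\kappa$ small, e.g. $\kappa < 1/12$) is much smaller than $|P| \approx N^{2/3+\kappa}$; so we obtain a $4$-AP-free set $P' \subset P$ with $|P'| = \Theta(N^{2/3+\kappa})$. Then, exactly as in Proposition \ref{thm:integersgen}, the container family $\mathcal C$ for the $3$-AP hypergraph on $[N]$ together with a union bound over $\mathcal C$ and the binomial estimate shows that with high probability $H(P)$ (and hence its subhypergraph $H(P')$) contains no independent set of size $m \approx |P'| / (\log |P'|)^{1+c'}$ for a suitable $c' = c'(c) > 0$ coming from the Bloom--Sisask bound in \eqref{r3}. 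The only adjustment needed relative to Proposition \ref{thm:integersgen} is that $p$ is now $N^{-1/3+\kappa}$ rather than $N^{-1/3}$; this only improves the relevant inequalities (the success probability bound $(2e\eta^\alpha)^m$ still goes to zero), so the argument goes through verbatim. Deleting elements only shrinks $f_3$, so $f_3(P') \le f_3(P) \ll |P'|/(\log|P'|)^{1+c}$.

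Combining the two conclusions, the set $A := P'$ has $|A| = \Theta(N^{2/3+\kappa})$, is $4$-AP-free, satisfies $E(A) \ll |A|^{2+\epsilon}$, and has $f_3(A) \ll |A|/(\log|A|)^{1+c}$. Since $N$ was an arbitrary large integer, this produces arbitrarily large sets $A$ with the stated properties. I do not anticipate a genuine obstacle here: the two required conclusions — small energy and small $f_3$ — are driven by independent features of the random construction (energy by the crude $p^4 N^2$ first-moment bound, and $f_3$ by the container-plus-union-bound argument already executed in Proposition \ref{thm:integersgen}), and the single free parameter $\kappa$ comfortably accommodates both. The mildly delicate point, which is why the statement is recorded ``without proof,'' is just the bookkeeping to confirm that introducing $\kappa$ does not break any of the numerous inequalities in the proof of Proposition \ref{thm:integersgen} and that $\eta(\kappa)$ can indeed be made smaller than $\epsilon$; both are routine.
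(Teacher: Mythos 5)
Your construction has a genuine quantitative error in the energy estimate, and it is fatal for small $\epsilon$. The number of additive quadruples $a+b=c+d$ with $a,b,c,d\in[N]$ is $\Theta(N^{3})$, not $N^{2}$: there are $N^{3}$ choices of $(a,b,c)$, and $d=a+b-c$ lands in $[N]$ for a positive proportion of them. So with $p=N^{-1/3+\kappa}$ the expected energy of $P$ is $\Theta(p^{4}N^{3})=\Theta(N^{5/3+4\kappa})$ (plus $\Theta(p^{2}N^{2})$ from the diagonal quadruples), while $|P|\approx N^{2/3+\kappa}$; since the nontrivial count concentrates, with high probability $E(P)\approx |P|^{(5/3+4\kappa)/(2/3+\kappa)}\approx |P|^{5/2}$, so the requirement $E(P)\ll|P|^{2+\epsilon}$ fails for every $\epsilon<1/2$ no matter how you tune $\kappa$. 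A sanity check would have caught this: your claimed bound $E(P)\ll N^{2/3+4\kappa}=|P|^{1+o(1)}$ is below the trivial lower bound $E(P)\geq |P|^{2}$ coming from the quadruples $(a,b,a,b)$, so the exponent $(2/3+4\kappa)/(2/3+\kappa)\to 1$ cannot be right.

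The paper states Theorem \ref{lowenergy} without proof, but the intended argument is visible in its finite-field twin (the theorem proved just before it via Theorem \ref{thm:CG'}): the random density must be taken just above the $3$-AP threshold, i.e.\ $p=N^{-1/2+\delta}$ with $\delta=\delta(\epsilon)$ small, not near $N^{-1/3}$. Then $E(P)\approx p^{4}N^{3}+p^{2}N^{2}\approx |P|^{2+2\delta/(\frac12+\delta)}\ll|P|^{2+\epsilon}$, and the container argument still closes: in the proof of Proposition \ref{thm:integersgen} the union bound needs $m=pN\eta^{1-\alpha}\gg (\log N)^{2}(N/\epsilon)^{1/2}\approx N^{1/2+O(\gamma)}(\log N)^{O(1)}$, which holds provided $\delta$ exceeds a constant multiple of $\gamma$; since for the Bloom--Sisask function $h$ the technical conditions \eqref{technical1} and \eqref{technical} hold for arbitrarily small fixed $\gamma$, one can choose $\gamma$ and then $\delta$ small in terms of $\epsilon$. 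Note that Proposition \ref{thm:integersgen} as stated hard-codes $p=\frac1{100}N^{-1/3}$, so you cannot cite it as a black box with a different $p$ (and certainly not claim that shrinking $p$ ``only improves the relevant inequalities'' --- decreasing $p$ makes the union bound harder, not easier); you must rerun its proof with the smaller density. Finally, the $4$-AP deletion step is unnecessary here, since Theorem \ref{lowenergy} does not assert $4$-AP-freeness, though with $p=N^{-1/2+\delta}$ it would in any case be harmless.
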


\bigskip

We end this section with an epilogue on the optimality of Theorem \ref{lowenergy}. For this purpose, we recall a theorem of Koml\'{o}s, Sulyok and Szemer\'{e}di \cite{KSS}.

\begin{theorem} \label{thm:KSS} There is an absolute constant $c>0$ such that for any sufficiently large set $A \subset \mathbb Z$,
\[
f_3(A) \geq c \cdot f_3(\{1,\dots,|A|\}) = c\cdot r_3(|A|).
\]
\end{theorem}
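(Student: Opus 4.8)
The plan is to prove a bound of the stated shape by a simple pigeonhole / averaging argument over affine images of $\{1,\dots,M\}$ inside $A$, where $M$ is chosen so that $r_3(M)$ is as large as possible subject to $A$ genuinely containing a progression of length $M$. The key point is that affine maps $x \mapsto ax+b$ preserve three-term progressions, so any $3$-AP-free subset of $A$ pulls back to a $3$-AP-free subset of $\{1,\dots,M\}$ whenever $A$ contains an arithmetic progression $\{b, b+a, \dots, b+(M-1)a\}$ of length $M$.

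First I would show that every sufficiently large $A \subset \mathbb Z$ contains a genuine $M$-term arithmetic progression for $M \asymp \log|A|$ (or at the very least, enough structure to run the argument); this is itself a classical statement, and in fact one only needs $M$ to be moderately large — a power of $\log|A|$ would already suffice given the known bounds on $r_3$, but the cleanest route is to invoke the fact that dense sets contain long progressions, or to observe that among $|A|$ integers in any interval there must be a long AP by a Dirichlet-type argument. Second, fixing such an embedded progression $Q = \{b+ja : 0 \le j < M\} \subset A$, I would note that $f_3(A) \ge f_3(Q) = r_3(M)$, since $f_3$ is monotone under inclusion and invariant under affine bijections. Third, I would choose $M$ as large as the first step permits and compare $r_3(M)$ with $r_3(|A|)$: because $r_3$ is (essentially) monotone and, by Behrend–Elkin, decays only slightly faster than linearly — $r_3(N)/N$ changes by at most a sub-polynomial factor when $N$ is raised to a fixed power — one gets $r_3(M) \gg r_3(|A|)$ provided $M \ge |A|^{c_0}$ for a suitable absolute $c_0 > 0$. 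The comparison $r_3(N^{c_0}) \gg r_3(N)$ follows from the explicit lower bound in \eqref{r3}, since $N^{-c \sqrt{\log N}}$-type factors are stable under $N \mapsto N^{c_0}$ up to constants.

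Alternatively, and perhaps more faithfully to the original Komlós–Sulyok–Szemerédi method, I would partition $\mathbb Z$ (or the interval containing $A$) into arithmetic-progression-like pieces and apply a counting argument: if $f_3(A)$ were much smaller than $r_3(|A|)$, then $A$ restricted to many of these pieces would be jointly $3$-AP-free yet dense, contradicting the sub-linear upper bound for $r_3$ on each piece combined with the sheer number of pieces. The main obstacle is the first step — exhibiting a long enough embedded progression (or an equivalent structured sub-configuration) in an arbitrary large set $A$ — since an arbitrary $A$ need not be an interval and the classical long-AP results give only logarithmically long progressions; reconciling that with the desired $r_3(|A|)$ on the right-hand side is exactly where one must use the near-linearity of $r_3$. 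Since this theorem is quoted from \cite{KSS} purely as an "optimality epilogue," I would in practice simply cite the original paper for the full argument and include only the affine-invariance observation and the $r_3$-stability comparison as the conceptual skeleton.
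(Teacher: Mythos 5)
First, note that the paper does not prove Theorem \ref{thm:KSS} at all: it is quoted verbatim from Koml\'os--Sulyok--Szemer\'edi \cite{KSS} (with a footnote that the original result is more general), so your final fallback of ``just cite \cite{KSS}'' is exactly what the authors do. The problem is that the conceptual skeleton you offer in place of a proof would not work, so it cannot stand as even a sketch.

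The first step is false: an arbitrary finite set $A\subset\mathbb Z$ need not contain any nontrivial three-term progression, let alone one of length $\asymp\log|A|$ --- take $A=\{2^j: j\le N\}$, a Sidon set, or a Behrend set. ``Dense sets contain long progressions'' and Dirichlet-type pigeonhole both require a density hypothesis relative to an ambient interval, which $A$ does not come with. More fatally, even if you could embed an AP of length $M=|A|^{c_0}$ with $c_0<1$, the conclusion would still fail: you conflate stability of the density $r_3(N)/N$ with stability of the count $r_3(N)$. Since $r_3(N)=N^{1-o(1)}$, one has $r_3(|A|^{c_0})\le |A|^{c_0}$, which is polynomially smaller than $r_3(|A|)$; to get $r_3(M)\gg r_3(|A|)$ by this route you would need $M\gg|A|$, i.e.\ a positive proportion of $A$ lying in a single AP, which is hopeless in general. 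The actual KSS argument runs in the opposite direction: rather than finding interval-like structure inside $A$, one maps $A$ \emph{onto} an interval-like structure by reducing modulo a suitable prime and applying a random dilation $x\mapsto\lambda x$; this is a Freiman-type homomorphism carrying $3$-APs of $A$ to (approximate) $3$-APs of a partition of $\mathbb Z_p$ into $M$ blocks, so the pullback to $A$ of (a blown-up copy of) an extremal $3$-AP-free subset of $\{1,\dots,M\}$ is $3$-AP-free in $A$, and averaging over $\lambda$ gives expected size $\gg |A|\,r_3(M)/M$; taking $M\asymp|A|$ yields $f_3(A)\gg r_3(|A|)$ with no structural hypothesis on $A$. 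Your second, ``partition into AP-like pieces'' paragraph gestures at something of this flavour but is too vague to certify, and as written neither route closes the gap; if you want more than the citation, the random-projection argument is the one to carry out.
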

Essentially, Theorem \ref{thm:KSS} tells us that $f_3(A)$ is minimal as a function of $|A|$ when $A$ is an interval.\footnote{In fact, \cite{KSS} gives much more general information about systems of linear equations, but the version stated as Theorem \ref{thm:KSS} corresponds to the case we are interested in in this paper.} Combining this with Elkin's theorem
\[
r_3(N) \gg \frac{\log^{1/4}{N}}{2^{2\sqrt{2} \sqrt{\log N}}} \cdot N,
\]
it follows that \textit{every} sufficiently large set $A \subset \mathbb Z$ contains a three-term progression free subset of cardinality at least
\begin{equation} \label{KSSelkin}
\Omega \left (\frac{\log^{1/4}{|A|}}{2^{2\sqrt{2} \sqrt{\log |A|}}} \cdot |A|\right).
\end{equation}
So Theorem \ref{lowenergy} is as close to optimal as the upper bound for $r_{3}(N)$ in \eqref{r3} is close to optimal. Note however that in this observation we have not used the additional hypothesis that $A$ has low additive energy. The next natural question therefore seems to be: is it possible to get a significantly better bound than 
\begin{equation}
\label{flower}
f_3(A) \gg \frac{\log^{1/4}{|A|}}{2^{2\sqrt{2} \sqrt{\log |A|}}} \cdot |A|
\end{equation}
for {\textit{all}} sets $A \subset \mathbb{Z}$ satisfying $E(A) \ll |A|^{2+\epsilon}$ for some (or even all) $0< \epsilon <1$? This time, the answer turns out to be (a modest) {\it{yes}}.

\begin{theorem} \label{Elkinwin}
Let $0 < \epsilon < 1$ and let $A \subset \mathbb{Z}$ be such that $E(A) \ll |A|^{2+\epsilon}$. Then,
$$f_{3}(A) \gg \frac{\log^{1/4}{|A|}}{2^{2\sqrt{(1+\epsilon)\log {N}}}} \cdot N.$$
\end{theorem}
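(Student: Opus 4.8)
The plan is to re-examine Elkin's construction \cite{ME} and observe that the only way it really uses the interval structure of $[N]$ is through the crude fact that $[N]$ contains the maximal number $\Theta(N^{2})$ of three-term progressions; a set of small additive energy has far fewer, and propagating this through the construction is what produces the gain. Write $N:=|A|$. The first (elementary) point is a count: if $T$ denotes the number of triples $(x,y,z)\in A^{3}$ with $x+z=2y$ — which in particular bounds the number of nontrivial three-term progressions in $A$ — then by Cauchy--Schwarz
\begin{align*}
T&=\sum_{y\in A}\#\{(a,a')\in A^{2}:\ a+a'=2y\}\\
&\le\Bigl(N\sum_{y\in A}\#\{(a,a')\in A^{2}:\ a+a'=2y\}^{2}\Bigr)^{1/2}\\
&\le\bigl(N\,E(A)\bigr)^{1/2}\ \ll\ N^{(3+\epsilon)/2}.
\end{align*}
Thus $T\ll N^{1+\mu}$ with $\mu:=\tfrac{1+\epsilon}{2}$, and $\tfrac12<\mu<1$ because $0<\epsilon<1$.

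The main step is a version of Elkin's lower bound that is sensitive to $T$: if $|A|=N$ and $A$ has at most $N^{1+\mu}$ nontrivial three-term progressions with $\tfrac12\le\mu\le1$, then $f_{3}(A)\gg N(\log N)^{1/4}\,2^{-2\sqrt{2\mu\log N}}$. To prove this one runs Elkin's argument (equivalently Green and Wolf's presentation of it) with $A$ in place of $[N]$. Fix a dimension $d$ and a small window width $w$, choose $\theta$ uniformly in the torus $\mathbb{R}^{d}/\mathbb{Z}^{d}$, and — after the harmless restriction of the points $a\theta$ to a fixed sub-cube that eliminates wrap-around, exactly as in \cite{ME,Be} — consider the annular set $A_{\theta}=\{a\in A:\ \|a\theta\|^{2}\in[s,s+w]\}$, where $\|\cdot\|$ is distance to the nearest lattice point and $s$ is a suitable centre. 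Since the elements of $A$ are distinct, a second-moment computation gives $|A_{\theta}|\gg N\phi$ with probability $\ge 3/4$, where $\phi$ is the measure of the restricted annulus, a quantity of order $2^{-\Theta(d)}w/\sqrt d$. On the other hand, if a progression $\{x,y,z\}\subset A$ has all three images in the annulus, the parallelogram law forces $\|(x-z)\theta\|\ll\sqrt w$, which for a fixed such triple happens with probability $\ll w^{d/2}$ (here $x-z\neq0$); hence the expected number of these ``bad'' progressions is $\ll N^{1+\mu}w^{d/2}$, and by Markov it is of that order with probability $\ge 3/4$. Choosing $\theta$ in both events and deleting a vertex from each bad progression leaves a three-term progression-free subset of size $\gg N\phi$, provided the parameters satisfy $N^{1+\mu}w^{d/2}\ll N\phi$. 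Optimising $d$ and $w$ subject to this inequality and to the geometric constraints of the construction gives $d\approx\sqrt{\log N}$, the loss $2^{-2\sqrt{2\mu\log N}}$ and the polylogarithmic gain $(\log N)^{1/4}$; when $\mu=1$ only the trivial bound $T\le N^{2}$ is used and one recovers Elkin's theorem exactly, which is a useful consistency check (note $2\sqrt{2\log N}=2\sqrt2\sqrt{\log N}$, matching \eqref{flower}). Feeding $T\ll N^{1+\mu}$ with $\mu=\tfrac{1+\epsilon}{2}$ into this statement yields
\[
f_{3}(A)\ \gg\ N\,(\log N)^{1/4}\,2^{-2\sqrt{(1+\epsilon)\log N}},
\]
which is the assertion of the theorem, and $\mu\le1$ precisely because $\epsilon<1$, so the hypothesis is met.

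The real work lies in the second step. One has to re-run Elkin's optimisation keeping $T$ as a free parameter instead of substituting $T=\Theta(N^{2})$ at the outset, and in particular justify the equidistribution of $\theta A$ for an arbitrary $N$-element set rather than an interval, pin down the admissible range of the width $w$ (and the exact shape of $\phi$) coming from the wrap-around restriction, and check that the two-sided optimisation really returns the constant $2\sqrt{2\mu}$ and nothing weaker. Conceptually this is a routine relativisation of a classical construction, but it carries essentially all of the difficulty; an alternative that avoids re-proving Elkin is to isolate from \cite{ME} the precise inequality its argument establishes, observe that the ambient number of three-term progressions enters it only through the bound $\le N^{2}$, and substitute $T\ll N^{(3+\epsilon)/2}$ in its place.
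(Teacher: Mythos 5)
Your proposal is essentially the paper's own argument: the same Cauchy--Schwarz step bounding the number of three-term progressions by $T(A)\le\sqrt{|A|\,E(A)}\ll |A|^{(3+\epsilon)/2}$, followed by a $T$-sensitive relativization of Elkin's bound proved by the Behrend-annulus random torus projection with a deletion step, which is exactly Proposition \ref{prop:t(A)} and its Green--Wolf-style proof. The only substantive caveat is in the step you defer: with only $\theta$ random, the pairwise equidistribution needed for your second-moment claim can fail for an arbitrary set $A$ (e.g.\ elements with small rational ratios give correlated events), so one should keep the random shift $\alpha$ as in \cite{GW} -- as the paper does via $\Psi_{\theta,\alpha}(n)=\theta n+\alpha$ -- which makes pairs exactly uniform and lets the whole argument run on first moments alone.
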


In particular, {\it{all}} sets with $E(A) \ll |A|^{2+\epsilon}$ for all $\epsilon > 0$ have slightly larger 3AP-free sets than we know $\left\{1,\ldots,N\right\}$ must have. Our argument follows closely the alternative proof of Elkin's bound due to Green and Wolf from \cite{GW}, which can be easily modified to start with a general set of $N$ integers instead of the interval $\left\{1,\ldots,N\right\}$. The main observation is that for a set $A$ with $E(A) \ll |A|^{2+\epsilon}$ for some $0 < \epsilon < 1$, we have a power saving on the total number $T(A)$ of three-term progressions with elements in $A$. Indeed, for each element $s \in A+A$, let $r_{A+A}(s)$ denote the number of pairs $(x,y) \in A \times A$ such that $x+y=s$. For each $b \in A$, note that $r_{A+A}(2b)$ represents the number of three-term progressions centered at $b$. By Cauchy-Schwarz,
$$T(A)^{2} = \left(\sum_{b \in A} r_{A+A}(2b)\right)^{2} \leq |A| \left(\sum_{b \in A} r_{A+A}^{2}(2b)\right).$$
Since
$$\sum_{b \in A} r_{A+A}^{2}(2b)\leq \sum_{s \in A+A} r_{A+A}^{2}(s) = E(A),$$
it follows that $T(A)^{2} \leq |A| \cdot E(A) \ll |A|^{3+\epsilon}$, i.e. $T(A) \ll |A|^{(3+\epsilon)/2}$. Theorem \ref{Elkinwin} will then follow from the following more general result.

\smallskip

\begin{proposition} \label{prop:t(A)}
Let $A \subset \mathbb Z$ be a set of size $N$ such that the number of three-term progressions satisfies $T(A)= N^2 / t(A)$. Then $A$ contains a three-term progression free subset $A'$ such that
\[
|A'| \gg N \cdot \frac{\left[\log \left( \frac{N}{t(A)} \right) \right]^{1/4} }{2^{2 \sqrt{2\log_2 \left( \frac{N}{t(A)} \right)}}}.
\]
\end{proposition}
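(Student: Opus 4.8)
The plan is to adapt the Green--Wolf Fourier-analytic proof of Elkin's lower bound for $r_3(N)$ so that it applies to an arbitrary set $A$ of $N$ integers whose three-term progression count $T(A) = N^2/t(A)$ may be much smaller than $N^2$ (which is the generic case $t(A) = \Theta(1)$, recovering Elkin's bound). First I would embed $A$ into a cyclic group $\mathbb Z/M\mathbb Z$ with $M$ a prime of size $\Theta(N)$ chosen so that a three-term progression in $A$ modulo $M$ is genuinely a three-term progression in $\mathbb Z$; this is standard and loses only constant factors. The heart of the Green--Wolf argument is to take a random Bohr set (equivalently, to randomly dilate and translate a suitable box/annulus in frequency space): one picks $d$ frequencies and a random shift and looks at the set of $x \in A$ landing in a thin annulus, arranged so that the annulus is ``3-AP-free by geometry'' (a union of shifted annuli whose radii avoid midpoint relations). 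The key point is that the expected size of the piece of $A$ captured this way is $\gg N$ times the volume of the annulus, while the expected number of three-term progressions of $A$ captured is controlled by $T(A)$ times (volume)$^{\,c}$ for an appropriate exponent; one then deletes one point from each surviving progression.

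The key steps, in order, are: (i) fix the ambient prime $M \approx CN$ and transfer the problem to $\mathbb Z/M\mathbb Z$; (ii) choose the annulus parameters --- dimension $d$ of the Bohr-type set and width of the annulus --- as in Green--Wolf, so the annulus contains no nontrivial three-term progression, with the crucial bookkeeping that the annulus has relative volume roughly $2^{-2\sqrt{2d}}$ up to the $d^{1/4}$-type polynomial correction; (iii) compute $\mathbb E[\,|A \cap (\text{random dilate of annulus})|\,] \gg N \cdot (\text{vol})$, which holds for \emph{any} $A$ simply because each fixed element lands in the random set with probability equal to the volume; (iv) compute $\mathbb E[\#\{\text{3-APs of }A\text{ inside the random set}\}] \ll T(A) \cdot (\text{vol})^{3}$ or the relevant power --- here is where $T(A) = N^2/t(A)$, rather than the trivial $N^2$, enters and buys the improvement; (v) optimize $d$ against the ratio $N/t(A)$ so that after deleting one element per captured progression a positive proportion of the captured part of $A$ survives, yielding the stated bound $|A'| \gg N \cdot [\log(N/t(A))]^{1/4} / 2^{2\sqrt{2\log_2(N/t(A))}}$; (vi) pull the resulting 3-AP-free subset back to $\mathbb Z$.

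I expect the main obstacle to be step (iv): in Green--Wolf the count of three-term progressions captured by the random annulus is estimated using that $[N]$ has exactly $\Theta(N^2)$ progressions distributed in a structured way, and one must check that the only property actually used is the \emph{total} count $T(A)$ (together with the trivial fact that a fixed progression lands in the random dilated annulus with probability bounded by a fixed power of the volume, since the differences of the three frequency-inner-products are linearly constrained). Making this robust --- i.e. verifying that no incidence-geometry input about $A$ beyond $|A| = N$ and $T(A) = N^2/t(A)$ is needed, and that the annulus can still be arranged 3-AP-free while the third point of a progression is ``slaved'' to the first two --- is the delicate part; the rest is the same optimization of $d$ that appears in \cite{GW}, now carried out with $N/t(A)$ in place of $N$. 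Once step (iv) is in place, steps (v)--(vi) are routine, and Theorem \ref{Elkinwin} follows immediately by substituting the bound $T(A) \ll |A|^{(3+\epsilon)/2}$, i.e. $t(A) \gg |A|^{(1-\epsilon)/2}$, so that $N/t(A) \ll N^{(1+\epsilon)/2}$ and $2^{2\sqrt{2\log_2(N/t(A))}} \ll 2^{2\sqrt{(1+\epsilon)\log N}}$ after adjusting the logarithm base into the exponent.
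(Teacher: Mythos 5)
You are following the same overall route as the paper (the Green--Wolf random projection $n \mapsto \theta n + \alpha \bmod 1$ into $\mathbb{T}^d$, intersecting the image of $A$ with a thin annulus, computing expectations, deleting one point from each surviving progression, and optimizing $d$), and you correctly observe that the only inputs needed are $|A|=N$ and $T(A)$. However, the step you defer --- your (iv) --- is precisely the step that produces the stated bound, and as written it is not correct: the probability that a fixed nontrivial progression of $A$ lands in the random annulus is not $(\mathrm{vol}\,S)^3$, because the three image points are affinely dependent (only pairwise uniformity holds: for distinct $n,n'$ the pair $(\Psi_{\theta,\alpha}(n),\Psi_{\theta,\alpha}(n'))$ is uniform on $\mathbb{T}^d\times\mathbb{T}^d$). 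The correct statement is that a progression is captured with probability $\mathrm{vol}(\Upsilon)$, where $\Upsilon=\{(x,y): x-y,\ x,\ x+y\in S\}$, and the whole point of the argument is the geometric estimate for $\mathrm{vol}(\Upsilon)$: by the parallelogram law, if $x$ and $x\pm y$ all lie in the annulus $r-\delta\le\|\cdot\|_2\le r$, then $\|y\|_2\le\sqrt{2\delta r}$, so $\mathrm{vol}(\Upsilon)\le \mathrm{vol}(S)\cdot(\sqrt{2\delta r})^{d}V_d\ll \mathrm{vol}(S)\cdot 10^{d}(\delta/\sqrt{d})^{d/2}$. One then chooses $\delta\approx \sqrt{d}\,(t(A)/N)^{2/d}$ so that the expected number of captured progressions is at most one third of the expected size $N\,\mathrm{vol}(S)\gg N\delta 2^{-d}$, and finally $d=\lceil\sqrt{2\log_2(N/t(A))}\rceil$. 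Without this confinement of the common difference the argument does not give the claimed bound: pairwise uniformity alone only gives capture probability at most $(\mathrm{vol}\,S)^2$, which after deletion yields merely $|A'|\gg t(A)$, far short of the Behrend/Elkin-type exponent. This parallelogram-law estimate is the missing idea, and it is exactly how the paper (following \cite{GW}) carries out your step (iv).

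Two smaller points. First, your step (ii) asks for an annulus that is literally free of three-term progressions; this is impossible for a subset of $\mathbb{T}^d$ of positive measure (a thin annulus always contains progressions with small common difference), so the argument must count and delete, as you in fact do later, rather than avoid. Second, the reduction to $\mathbb{Z}/M\mathbb{Z}$ with a prime $M=\Theta(N)$ in your step (i) is both unnecessary and unavailable in general: an arbitrary set of $N$ integers need not inject under reduction modulo any prime of size $\Theta(N)$. Fortunately nothing in the argument requires positive density in an ambient group --- the map $\Psi_{\theta,\alpha}$ is defined on all of $\mathbb{Z}$ --- so this step can simply be dropped, as the paper does.
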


\smallskip

{\textit{Proof.}} Let $N$ be a sufficiently large positive integer and let $A$ be some four-term progression free set of size $N$. Let $d$ be a positive integer to be precisely determined later (but which we shall think of as sufficiently large for the time being), and let $\mathbb{T}^{d} = \mathbb{R}^{d}/\mathbb{Z}^{d}$ denote the $d$-dimensional torus. For each $\theta,\alpha \in \mathbb{T}^{d}$, let $\Psi_{\theta,\alpha} :\ A \to \mathbb{T}^{d}$ be the map defined by
\begin{equation}
    \label{psi}
    n \mapsto \theta n + \alpha \mod 1.
\end{equation}
For a fixed $n$ integer, as we let $\theta$, $\alpha$ vary uniformly and independently over $\mathbb{T}^{d}$, the image $\Psi_{\theta,\alpha}$ is uniformly distributed on the $d$-dimensional torus. Moreover, it is also true that the pair of points
\begin{equation} \label{UD}
\left(\Psi_{\theta,\alpha}(n),\Psi_{\theta,\alpha}(n')\right)\ \text{is uniformly distributed on}\ \mathbb{T}^{d} \times \mathbb{T}^{d}
\end{equation}
as $\theta,\alpha$ vary uniformly and independently over $\mathbb{T}^{d}$, provided that integers $n$ and $n'$ are distinct. Indeed,
$$\int e^{2\pi i(k \cdot (\theta n + \alpha) + k' \cdot (\theta n'+\alpha))} d\theta d\alpha = 0$$
unless $k+k'=kn+k'n'=0$, which is however impossible if $n$ and $n'$ are distinct. Since the exponentials $e^{2\pi i (kx+k'x')}$ are dense in $L^{2}(\mathbb{T}^{d} \times \mathbb{T}^{d})$, the claim checks out.

 Fix $\delta$ to be a positive constant which we will declare later. We identify the $d$-dimensional torus $\mathbb{T}^{d}$ with $[0,1)^{d}$, and for each $r \leq \frac{1}{2} \sqrt{d}$, we define the annulus
$$S(r):= \left\{x \in [0,1/2]^{d}:r - \delta \leq \|x\|_{2} \leq r\right\}.$$
Like in Lemma 2.2 from \cite{GW}, out of all of the possible values of $r$, we choose the one for which $S:=S(r)$ satisfies 
\begin{equation}
    \label{annulus}
\operatorname{vol}(S(r)) \geq c\delta 2^{-d},
\end{equation}
for some absolute constant $c$. 

Finally, for each $\theta,\alpha$ chosen uniformly and independently at random on $\mathbb{T}^{d}$, we let $A_{\theta,\alpha}$ be the subset of $A$ defined by
$$A_{\theta,\alpha}:=\left\{n \in A: \Psi_{\theta,\alpha}(n) \in S\right\},$$
where $\Psi_{\theta,\alpha}$ is the map from \eqref{psi}. By \eqref{UD}, the expected size of $A_{\theta,\alpha}$ satisfies
\begin{equation}
    \label{size}
    \mathbb{E}_{\theta,\alpha}|A_{\theta,\alpha}| = N \cdot \operatorname{vol}(S),
\end{equation}
while the expected number $T(A_{\theta,\alpha})$ of three term progressions in $A_{\theta,\alpha}$ is
\begin{equation}
    \label{3APs}
    \mathbb{E}_{\theta,\alpha}T(A_{\theta,\alpha}) = T(A) \cdot \operatorname{vol}(\Upsilon).
\end{equation}
Here $\Upsilon$ represents the set points $(x,y) \in \mathbb{T}^{d} \times \mathbb{T}^{d}$ so that $x-y$, $x$ and $x+y$ all lie in $S$. 

We can upper bound the volume of $\Upsilon$ as follows. By the parallelogram law
$$2\|x\|^{2}+2\|y\|_{2}^{2} = \|x+y\|_{2}^{2} + \|x-y\|_{2}^{2},$$
so
$$\|y\|_{2} \leq \sqrt{r^{2}-(r-\delta)^{2}} \leq \sqrt{2 \delta r}.$$
If $V_{d}$ denotes the volume of the unit ball in $\mathbb{R}^{d}$, then this implies
$$\operatorname{vol}(\Upsilon) \leq \operatorname{vol}(S) \cdot (\sqrt{2\delta r})^{d} V_{d}.$$
On the other hand, we have the estimate
$$V_{d} \ll 10^{d} d^{-d/2};$$
therefore 
$$\operatorname{vol}(\Upsilon) \leq \operatorname{vol}(S) \cdot (\sqrt{2\delta r})^{d} 10^{d} d^{-d/2} \leq \operatorname{vol}(S) \cdot 10^{d} \left(\frac{\delta}{\sqrt{d}}\right)^{d/2}.$$
By \eqref{3APs}, this estimate implies
$$\mathbb{E}_{\theta,\alpha}T(A_{\theta,\alpha}) = T(A) \cdot \operatorname{vol}(\Upsilon) \leq C \frac{N^{2}}{t(A)} \cdot \operatorname{vol}(S) \cdot 10^{d} \left(\frac{\delta}{\sqrt{d}}\right)^{d/2},$$
for an absolute constant $C>0$. Now, if we choose $\delta$ and $d$ so that
\begin{equation}
    \label{choice}
    10^{d} \left(\frac{\delta}{\sqrt{d}}\right)^{d/2} \leq \frac{1}{3C} \cdot \frac{t(A)}{N},
\end{equation}
then by \eqref{size}
$$\mathbb{E}_{\theta,\alpha}T(A_{\theta,\alpha}) = T(A) \cdot \operatorname{vol}(\Upsilon) \leq \frac{1}{3} \cdot N \cdot \operatorname{vol}(S) = \frac{1}{3} \cdot \mathbb{E}_{\theta,\alpha}|A_{\theta,\alpha}|.$$
Consequently, by  deleting one element from each progression appearing in $A_{\theta,\alpha}$, the remaining subset $A'_{\theta,\alpha} \subset A_{\theta,\alpha} \subset A$ is three-term progression-free. Moreover, $A'_{\theta,\alpha}$ has expected size
$$\mathbb{E}_{\theta,\alpha}|A'_{\theta,\alpha}| \geq \frac{2}{3}\cdot \mathbb{E}_{\theta,\alpha}|A_{\theta,\alpha}| \geq \frac{2}{3} \cdot N \cdot \operatorname{vol}(S) \gg N \delta 2^{-d},$$
where the last inequality follows from \eqref{annulus}. In particular, there exists a specific choice of $\theta,\alpha \in \mathbb{T}^{d}$ so that $A' := A'_{\theta,\alpha}$ is a three-term progression free subset of $A$ for which
$$|A'| \gg N \delta 2^{-d}.$$
Finally, take
$$\delta:= C' \sqrt{d} \cdot \left(\frac{t(A)}{N}\right)^{2/d}$$
for some absolute constant $C'>0$ so that \eqref{choice} is achieved. For this choice, we have
$$|A'| \gg N \delta 2^{-d} \gg \sqrt{d} \cdot t(A)^{2/d} N^{1-2/d} \cdot 2^{-d}.$$
Set
\[
d= \left \lceil \sqrt{2 \log_2 \left( \frac{N}{t(A)} \right )} \right \rceil.
\]
It then follows that
\[
|A'| \gg N \cdot \frac{\left[\log_2 \left( \frac{N}{t(A)} \right) \right]^{1/4} }{2^{2 \sqrt{2\log_2 \left( \frac{N}{t(A)} \right)}}}.
\]
This concludes the proof of Proposition \ref{prop:t(A)} and thus that of Theorem \ref{Elkinwin} (one can check that taking $t(A) = \Theta(N^{(1-\epsilon)/2})$ in Proposition \ref{prop:t(A)} yields the bound from Theorem \ref{Elkinwin}, as claimed).

\bigskip

\section{Concluding remarks}
In this last section, we would like to end with a few more words on the upper bound from Theorem \ref{thm:integers}. In light of Theorem \ref{thm:KSS}, this is as good in some sense as the upper bound for $r_{3}(N)$ from \eqref{r3} but, like in the second part of Section 6, one can then similarly ask whether it is possible to improve on
\begin{equation}
\label{flower4}
f_3(A) \gg \frac{\log^{1/4}{|A|}}{2^{2\sqrt{2} \sqrt{\log |A|}}} \cdot |A|
\end{equation}
for sets $A$ without nontrivial four-term progressions. In Theorem \ref{thm:integers}, the 4-AP-free set $A$ we constructed with 
$$ f_3(A) \ll \frac{1}{(\log N)^{1+\epsilon}} \cdot N$$
also happened to satisfy the property that $T(A) = \Theta(|A|^{3/2})$, so by Proposition \ref{prop:t(A)} it also has larger three-term progression free sets than we know $\left\{1,\ldots,N\right\}$ must have, namely
$$f_{3}(A) \gg \frac{\log^{1/4}{N}}{2^{2\sqrt{\log {N}}}} \cdot N.$$
In \cite{GR}, Gyarmati and Ruzsa also improved on \eqref{flower4} when $A = \left\{1,2^2,\ldots,N^{2}\right\}$ by more number theoretic means that are quite specific to perfect squares. However, is it possible to get a bound better bound than \eqref{flower4} for {\textit{all}} $4$-AP free sets $A$? A construction of Fox and the first author from \cite{FP} shows that four-term progression free sets of size $N$ may sometimes contain $\gg N^{2}/2^{3(\log N)^{1/3}}$ three-term progressions, so our Proposition \ref{prop:t(A)} doesn't yield any asymptotic gain over the Elkin lower bound in general. It would be interesting if other methods would be able to provide such a result.

\begin{comment}
...especially since this could lead to new information about $r_3(N)$. We conclude by sketching the connection between the problem of obtaining lower bounds for $f_3(A)$ when $A$ is $4$-AP free and that of obtaining lower bounds for $r_3(N)$, with the hope of motivating further interest in the former problem. 

\fxnote{I added some old stuff back in here...}
Suppose that we had an improvement to \eqref{flower4}, giving
\[
f_3(A) \gg \frac{|A|}{g(|A|)}
\]
for all $4$-AP free sets $A \subset \mathbb N$. Let us also make the simplifying assumption that $r_3(N)=\frac{N}{h(N)}$ for some $h$ satisfying the conditions of Theorem \ref{thm:integersgen}. We can then apply to Theorem \ref{thm:integersgen} to obtain a $4$-AP free set $A$ with $|A|=n$ and
\[
\frac{n}{g(n)}\ll  f_3(A) \ll \frac{n}{[h(n^{\frac{3}{2}\gamma})]^{1-\alpha}},
\]
for all $\alpha>0$ and all $n$ sufficiently large. In particular,
\[
h(n^{\frac{3}{2}\gamma}) \ll g(n)^{1+\alpha'}
\]
for all $\alpha'>0$. Now set $m=n^{\frac{3}{2}\gamma}$. Then for all $m$ sufficiently large we have
\[
\frac{m}{r_3(m)}=h(m) \ll g(m^{\frac{2}{3\gamma}})^{1+\alpha'},
\]
and a rearrangement yields
\[
r_3(m) \gg \frac{m}{[g(m^\frac{2}{3\gamma})]^{1+\alpha'}}.
\]
\end{comment}

\bigskip

\providecommand{\bysame}{\leavevmode\hbox to3em{\hrulefill}\thinspace}

\end{document}